\newtheorem{thm}{Theorem}
\newtheorem{lemma}[thm]{Lemma}
\newtheorem{corollary}[thm]{Corollary}
\theoremstyle{definition}
\newtheorem{remark}[thm]{Remark}
\newtheorem{ass}{Assumption}
\newcommand{\bx}{\mathbf{x}}
\newcommand{\bz}{\mathbf{z}}
\newcommand{\bE}{ \mathbb{E} }
\newcommand{\bN}{ { N} }
\newcommand{\bQ}{ { Q} }
\newcommand{\bX}{ { X} }
\newcommand{\bY}{ { Y} }
\newcommand{\bZ}{ { Z} }
\newcommand{\bP}{\mathbb{P}}
\newcommand{\bzero}{ {\bf 0} }
\def \bX{\mathbf{X}}
\def \bQ{\mathbf{Q}}
\def \bmu{\pmb{\mu}}
\DeclareMathOperator{\sech}{sech}
\def\thickhline{%
	\noalign{\ifnum0=`}\fi\hrule \@height \thickarrayrulewidth \futurelet
	\reserved@a\@xthickhline}
\def\@xthickhline{\ifx\reserved@a\thickhline
	\vskip\doublerulesep
	\vskip-\thickarrayrulewidth
	\fi
	\ifnum0=`{\fi}}
\newlength{\thickarrayrulewidth}
\numberwithin{equation}{section}
\renewcommand{\hat}{\widehat}
\renewcommand{\tilde}{\widetilde}
\begin{document}
\title[Nonsense association]{Nonsense associations in Markov random fields with pairwise dependence}

\author[S. Bhattacharya]{Sohom Bhattacharya}
\address{S. \ Bhattacharya\hfill\break
	Department of Statistics\\ University of Florida\\ Gainesville, FL 32611, USA.}
\email{bhattacharya.s@ufl.edu}
\author[R. Mukherjee]{Rajarshi Mukherjee}
\address{R. \ Mukherjee\hfill\break
	Departments of Biostatistics\\ Harvard T.H. Chan School of Public Health\\ Boston, MA 02115, USA.}
\email{ram521@mail.harvard.edu}
\author[E. Ogburn]{Elizabeth Ogburn}
\address{E. \ Ogburn\hfill\break
	Departments of Biostatistics\\ Johns Hopkins Bloomberg School of Public Health\\ Baltimore, MD, 21205, USA.}
\email{eogburn@jhu.edu}

\begin{abstract}
   Yule (1926) identified the issue of ``nonsense correlations" in time series data, where dependence within each of two random vectors causes overdispersion -- i.e. variance inflation -- for measures of dependence between the two. During the near century since then, much has been written about nonsense correlations -- but nearly all of it confined to the time series literature. In this paper we provide the first, to our knowledge, rigorous study of this phenomenon for more general forms of (positive) dependence, specifically for Markov random fields on lattices and graphs. We consider both binary and continuous random vectors and  three different measures of association: correlation, covariance, and the ordinary least squares coefficient that results from projecting one random vector onto the other. In some settings we find variance inflation consistent with Yule's nonsense correlation. However, surprisingly, we also find variance \textit{deflation} in some settings, and in others the variance is unchanged under dependence. Perhaps most notably, we find general conditions under which OLS inference that ignores dependence is valid despite positive dependence in the regression errors, contradicting the presentation of OLS in countless textbooks and courses.  
\end{abstract}

\maketitle

\section{Introduction}

\noindent It is well known that two independent, non-stationary time series often appear to be correlated with one another and that tests of independence have inflated type-I error \citep{phillips1986understanding,ernst2017yule}; this phenomenon is known variously as  \textit{nonsense}, \textit{spurious}, or \textit{volatile} correlations and has been the subject of extensive research. Less well-known is that related phenomena can occur in many other dependent data settings, e.g. dependence due to spatial, batch, population, genetic, or network structure, and that similar phenomena can occur for different measures of association.
We will use the term \textit{association} to be more general than \textit{correlation} and to encompass correlation coefficients, estimated covariances, and regression coefficients as measures of linear non-independence. 

The problem of nonsense correlations in time series was first articulated by \cite{yule1926we}, building upon earlier works by \cite{hooker1905correlation,student1914iv,yule1921time}. Much work has been on pre-whitening data, e.g. extracting independent increments, before attempting to learn about associations between time series. \cite{ernst2017yule} derived the exact asymptotic distribution of the estimated correlation of two independent Weiner processes, showing overdispersion that does not vanish asymptotically and ``solving" the problem first put forth by \cite{yule1926we}.

However, outside of the time series econometrics literature, this idea seems to be largely unknown. We have been able to find a smattering of references in the broader literature:  of \cite[Section 12.2.3]{kass2014analysis} illustrates nonsense associations due to temporal autocorrelation in linear regression. \cite{harris2020nonsense} discussed the importance of dealing with nonsense correlations in the study of neurophysiological processes in neuroscience. \cite{royama2005moran} noted that natural population processes (e.g. population growth) are susceptible to nonsense correlations. Moving beyond time series data, \cite{student1914iv} first noted that, in analogy to temporal processes, spatial processes are also susceptible to more highly variable estimates of correlation, and that the problem there can similarly be solved by extracting independent increments. \cite{clifford1989assessing} and \cite{dutilleul1993modifying} derived the effective sample size for tests of correlation between two spatial processes under strong parametric assumptions. Section 12.4 of \citealp{efron2016computer} includes a brief discussion of \textit{ephemeral predictions}, which is a manifestation of nonsense associations in the prediction setting.  
Recently \cite{lee2021network} made a case for the importance of nonsense associations in social network data. \cite{bhattacharya2022pc} studied the power and type I error of a family of tests of association commonly used in genome-wide association studies (GWAS), where the independent (e.g. phenotype) and dependent (e.g. SNP) variables both exhibit dependence due to family or population structure; they showed that standard tests of association can have type I error bounded away from the nominal level or even converging to 1.

In addition to statistical papers, a handful of papers from the social sciences have pointed out phenomena that appear to be instances of nonsense associations, though none of these papers make the connection explicitly: ``Galton's problem," a well-known concept in cross-cultural studies and anthropology, refers to the fact that within-culture dependence makes it difficult to perform inference using cross-cultural data, and at least one group of authors have connected this to network dependence \citep{dow1984galton}. \cite{dellaposta2015liberals} describes how incidental clustering of beliefs or behaviors in social networks can look like systematic associations. \cite{kelly2019standard} shows that many findings in the economics literature on ``persistence" or ``deep origins," purporting to find connections between characteristics of a place in modern day and characteristics of the same place in the distant past, can be explained by spatial autocorrelation rather than any meaningful association between past and present characteristics. 

 MRFs have been used crucially for modeling \textit{spatially correlated data sets}, which appear naturally in population genetics~\cite{banf2017enhancing, wei2007markov}, image segmentation~\cite{celeux2003procedures,chatzis2008fuzzy}, climate data modeling~\cite{guillot2015climate,prates2022temporal}. In this research in conditional independence and structure testing has gained popularity recently \cite{neykov2019property,schaeben2014mathematical}, it is worth noting that the literature on testing for the independence of two MRFs typically assumes multiple \textit{independent} observations from the MRF distribution, with the number of observations going to infinity; in this paper we are concerned with the depenedent data setting where there is only one draw from each of two MRF distributions with the dimension going to infinity.



In this paper, we derive the first, to our knowledge, characterization of nonsense associations for a class of network-dependent data. 
We consider the asymptotic distributions of estimates and tests of association between two random vectors, $\mathbf{X}$ and $\mathbf{Y}$, where the coordinates of $\mathbf{X}$ and of $\mathbf{Y}$ exhibit dependence but $\mathbf{X}$ is independent of $\mathbf{Y}$. We focus on dependence that is governed by graph or network structure and generated from a Markov random field (MRF) model with pairwise positive dependence. We will demonstrate that under some MRF models, the asymptotic variance of estimates of correlation or covariance is inflated relative to the i.i.d. setting; this is analogous to Yule's nonsense correlations and would result in inflated type I error were i.i.d. tests to be used without correction. However, in other settings dependence results in \textit{deflated} asymptotic variance; this is in stark contrast to Yule's finding and contradicts the heuristic that positive dependence results in smaller effective sample sizes and therefore higher variances. 

In a particularly surprising result, we challenge the common wisdom that positive dependence in the errors of a simple linear regression model should result in standard i.i.d. inference for the regression coefficient being anticonservative. In fact, we show that in order for i.i.d. inferential procedures to be invalid, the variance-covariance matrices of the dependent and the independent variables must share some structure. Otherwise, e.g. when the independent variable is itself i.i.d., i.i.d. inferential procedures achieve nominal type-I error rates. This contradicts the claim, presented in countless regression classes and textbooks, that correlated regression errors should be handled with modified methods in order for inference to be valid \citep{vittinghoff2006regression, gross2003linear,chatterjee2013handbook}. 


\section{Setting}\label{sec:setting}

Consider two independent random vectors observed on $n$ nodes in a network, $\mathbf{X} = (X_1, \ldots , X_n)$ and $\mathbf{Y} = (Y_1, \ldots, Y_n)$, where the nodes exhibit positive pairwise dependence whenever they are connected by a network tie. More precisely, for $\mathcal{Z}\subseteq \mathbb{R}$, we assume $\mathbf{X},\mathbf{Y} \in \mathcal{Z}^n$ are independently distributed with density of the form
\begin{equation}\label{eq:general_density}
    \bP_{\mathbf{Q}_n}(z) \propto \exp{\left(\bz^\top\mathbf{Q}_n \bz\right)}, \quad z \in \mathcal{Z}^{n}
\end{equation}
where $\bQ_n$ is some $n$ by $n$ matrix related to the adjacency matrix of the underlying network, which can be different for $\mathbf{X},\mathbf{Y}$. We drop the subscript $n$ when context allows it.
Such families are typical examples of Markov Random Fields (MRFs) which are a natural class of models for dependent collections of random variables and have been used extensively across an array of disciplines to explore statistical inference in such dependent systems, including to model social networks (see, e.g., \citealp{west2014exploiting, domingos2001mining, ahmed2009recovering, kindermann1980relation,demarzo2003persuasion,daskalakis2019regression,daskalakis2020logistic,kandiros2021statistical,ogburn2020causal}).
The family \eqref{eq:general_density} includes two classical examples which we will study in this paper: Gaussian graphical models corresponding to $\mathcal{X}= \mathbb{R}$ and Ising models corresponding to $\mathcal{X}= \pm 1$. We will extensively focus on these two cases since the distribution of the statistics we aim to study here remains asymptotically standard normal under no dependence within the coordinates of $\mathbf{X}$ or $\mathbf{Y}$.

Throughout we consider a single pair $(\mathbf{X},\mathbf{Y})$ with $n$ going to infinity and inference on the distribution of the $n$ pairs of observations $(X_i,Y_i)$ with dependence across units. This distinguishes our framework from the more typical framework in which a draw from an MRF represents one single, high-dimensional observation and inference would typically require multiple such observations. We  consider three quantities of interest. The sample covariance $T_n$ is defined by 
\begin{equation}\label{eq:define_tn}
    T_n := \frac{1}{n}\sum_{i=1}^n (X_i -\overline{\bX}_n) (Y_i -\overline{\mathbf{Y}}_n),
\end{equation}
where $\overline{\bX}_n:=\frac{1}{n}\sum_{i=1}^{n}X_i$, $\overline{\mathbf{Y}}_n:=\frac{1}{n}\sum_{i=1}^{n}Y_i$ are the sample means. We will also consider the sample correlation 
\begin{equation}\label{eq:define_rho}
    \rho_n := \frac{\sum_{i=1}^n (X_i -\overline{\bX}_n) (Y_i -\overline{\mathbf{Y}}_n)} 
    { \sqrt{\sum_{i=1}^n(X_i -\overline{\bX}_n)^2} \sqrt{\sum_{i=1}^n(Y_i -\overline{\mathbf{Y}}_n)^2}}.
\end{equation}
 Although tests of independence between $\bX$ and $\mathbf{Y}$ can be done using the sample covariance, it is of interest to note when the asymptotic behavior of $\rho_n$ differs from that of $T_n$. Finally, for continuous data we also consider $\hat\beta$, the estimated coefficient for $X$ in a simple linear regression of $Y$ on $X$.

 In all of the settings we consider below, it is easy to show that the asymptotic null distribution of both $T_n$ and $\rho_n$ under an i.i.d. data-generating process is $N(0,1)$. Similarly, when both $\bX$ and $\mathbf{Y}$ are i.i.d., $\hat\beta$ has a known normal asymptotic distribution.  Unlike the time series setting, where the null distribution of $\rho_n$ under dependence generated by Weiner processes is not generally Gaussian \citep{ernst2017yule}, we find that in many settings with MRF-generated dependence, Gaussianity is preserved for various measures of association. Even in these settings, we find that the asymptotic variance of $\rho_n$ may be inflated or deflated relative to the nominal value of $1$. When variance is inflated, the type I error of any procedure that ignores dependence will be inflated, which is another hallmark of nonsense associations.

 Proofs of all new results are in Section \ref{sec:proofs}.

\section{Results for binary data}\label{subsec:ising}

Let $\mathbf{Z}=(z_1,\ldots,z_n)^\top\in \{\pm 1\}^n$ be a random vector with the joint distribution of $\mathbf{Z}$ given by an Ising model defined as:
\begin{align}
\bP_{\beta, \bQ_n}(\mathbf{Z}=\bz):=\frac{1}{Z(\beta,\mathbf{Q}_n)}\exp{\left(\frac{\beta}{2}\bz^\top\mathbf{Q}_n \bz\right)},\qquad \forall \bz \in \{\pm 1\}^n,
\label{eqn:general_ising}
\end{align}
where $\mathbf{Q}_n$ is an $n \times n$ symmetric and hollow matrix, $\beta \ge 0$ is a non-negative real number usually referred to as the ``inverse temperature," and $Z(\beta,\mathbf{Q}_n)$ is a normalizing constant.  
The pair $(\beta,\mathbf{Q}_n)$ characterizes the dependence among the coordinates of $\bX$, and $X_i$'s are independent if and only if $\beta\mathbf{Q}_n=\mathbf{0}_{n\times n}$.  The matrix $\mathbf{Q}_n$ will be associated with a certain labeled graph: Let $\mathbb{G}_n=(V_n,E_n)$ with vertex set $V_n=\{1,\dots,n\}$ and undirected edge set $E_n \subseteq V_n\times V_n$, then the corresponding $\mathbf{Q}_n= n G_n / 2|E_n| $, where $G_n$ is the adjacency matrix of $\mathbb{G}_n$.\par

First, we present results for sparse graphs, specifically for lattices, where we see that (in high temperature regimes) both the sample correlation and sample covariance exhibit variance inflation. This result generalizes nonsense associations from time series, where dependence is informed by distance in $\mathbb{R}^1$, to higher dimensional dependence such as would be found in spatial or network data, and we believe to be the first such generalization beyond one-dimensional dependence.  We suspect these results will hold for other connected, bounded-neighborhood graphs, but we leave the proof to future work.

We next present results for complete and dense (i.e., $E_n=\Theta(n^2)$) regular graphs. This  includes some specific examples of mean-field type models, see e.g. \citep{basak2015universality,jain2018mean}. In a surprising contrast to other settings, dependence in these dense graphs sometimes results in variance \textit{deflation} for the sample covariance, while the sample correlation has the same asymptotic distribution as under independence. 

\subsection{Lattice models}
Let the data points $i=[n]$ be vertices of $d$-dimensional hypercubic lattice $\Lambda_{n,d}$ and the underlying graph $\mathbb{G}_n$  be the nearest neighbor (in sense of Euclidean distance) graph on these vertices. 
This nearest neighbor graph corresponds to the matrix $\bQ_{ij}=\mathbf{1}(\|i-j\|_1= 1)$.

The lattice Ising model typically undergoes a ``thermodynamic" phase transition at a value of the temperature parameter that is determined by the dimension $d$ of the lattice: there exists $\beta_c:= \beta_c(d)$ which governs the macroscopic behavior of the system of observations $X_i,i\in \Lambda_{n,d}$ as $n\rightarrow \infty$. In particular, the sample average $\overline{\bX}_n =n^{-1}\sum X_i$ converges to $0$ in probability for $\beta<\beta_c(d)$ and to an equal mixture of two delta-dirac random variables $m_+(\beta)$ and $m_-(\beta)=-m_+(\beta)$, for $\beta>\beta_c(d)$ (\cite{lebowitz1977coexistence}). The value of $\beta_c(d)$ (which is known to be strictly positive for any fixed $d\geq 1$) is hard to derive precisely. For $d=1$, it is known from the first work in this area \citep{ising1925beitrag} that $\beta_c(1)=+\infty$ and consequently the Ising model in 1-dimension is said to have no phase transitions. The seminal work of \cite{onsager1944crystal} derived  $\beta_c(2)=2\log(1+\sqrt{2})$. Obtaining an analytical formula for $\beta_c(d)$ for $d\geq 3$ remains open. We refer the interested reader to the excellent expositions in \cite{friedli2017statistical,duminil2017lectures} for more details on this topic.\par
Now, we state the main result of this section.
\begin{thm}\label{thm:lattice_inflation}
    Suppose $\mathbf{X}\sim \bP_{\beta_1, \mathbf{Q}(\Lambda_{n,d})}$, $\mathbf{Y}\sim \bP_{\beta_2, \mathbf{Q}(\Lambda_{n,d})}$ for some $\beta_1,\beta_2<\beta_c(d)$ with $\mathbf{X}$  independent of $\mathbf{Y}$. Then, there exists $v= v(\beta_1,\beta_2) \in \mathbb{R}^+$ such that $\sqrt{n} T_n \xrightarrow{d} N(0,v^2)$ and the asymptotic variance satisfies $v^2 >1$. The sample correlation has the same asymptotic distribution as the sample covariance: $\sqrt{n} \rho_n \xrightarrow{d} N(0,v^2)$. Moreover, $v(\beta_1,\beta_2)$ is a non-decreasing function in either coordinate. 
    
\end{thm}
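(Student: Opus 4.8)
The plan is to reduce the sample covariance to a single weakly dependent sum, identify its limiting variance through the Ising two-point function, establish asymptotic normality, and read off positivity and monotonicity from classical correlation inequalities. First I would center: because $\bP_{\beta,\mathbf{Q}(\Lambda_{n,d})}$ carries no external field it is invariant under the global spin flip $\bz\mapsto-\bz$, so $\mathbb{E}[X_i]=\mathbb{E}[Y_i]=0$, and expanding the product gives $nT_n=\sum_i X_iY_i-n\,\overline{\mathbf{X}}_n\overline{\mathbf{Y}}_n$. Since $\beta_1,\beta_2<\beta_c(d)$ lie in the subcritical regime, sharpness of the phase transition yields exponential decay of the two-point function, $\langle\sigma_0\sigma_k\rangle_\beta\le Ce^{-c|k|}$, so the susceptibility $\sum_k\langle\sigma_0\sigma_k\rangle_\beta$ is finite; this gives $\sqrt n\,\overline{\mathbf{X}}_n=O_p(1)$, $\sqrt n\,\overline{\mathbf{Y}}_n=O_p(1)$, hence $\sqrt n\,\overline{\mathbf{X}}_n\overline{\mathbf{Y}}_n=o_p(1)$, and it suffices to analyze $n^{-1/2}\sum_i X_iY_i$. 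Using independence of $\mathbf{X}$ and $\mathbf{Y}$,
\[
\operatorname{Var}\!\Big(n^{-1/2}\sum_i X_iY_i\Big)=\frac1n\sum_{i,j}\langle X_iX_j\rangle\,\langle Y_iY_j\rangle .
\]
Translation invariance in the bulk together with the (volume-uniform) exponential decay lets me replace finite-volume by infinite-volume correlations and pass to the Cesàro limit, giving $v^2=\sum_{k\in\mathbb{Z}^d}\langle\sigma_0\sigma_k\rangle_{\beta_1}\langle\sigma_0\sigma_k\rangle_{\beta_2}$, a convergent sum. Controlling the boundary of the lattice box---showing the fraction of sites within distance $O(\log n)$ of $\partial\Lambda_{n,d}$ is negligible and that finite-volume correlations converge to the unique infinite-volume ones---is the routine bookkeeping here.

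Positivity and monotonicity then come for free from Griffiths' inequalities. The $k=0$ term equals $\langle\sigma_0^2\rangle_{\beta_1}\langle\sigma_0^2\rangle_{\beta_2}=1$ because spins are $\pm1$, while by the first GKS inequality every term with $k\ne0$ is nonnegative, and $\langle\sigma_0\sigma_{e_1}\rangle_\beta>0$ strictly for $\beta>0$; hence $v^2>1$ whenever $\beta_1,\beta_2>0$. For monotonicity, the second GKS inequality gives $\partial_\beta\langle\sigma_0\sigma_k\rangle_\beta\ge0$, so each summand is non-decreasing in $\beta_1$ for fixed $\beta_2$ and vice versa; a sum of non-decreasing nonnegative terms is non-decreasing, which yields the stated monotonicity of $v=v(\beta_1,\beta_2)$ in each coordinate.

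The main obstacle is the central limit theorem for $n^{-1/2}\sum_i X_iY_i$, whose summands are weakly but genuinely dependent and, being products, are neither associated nor a martingale-difference array. I would attack this by conditioning on $\mathbf{Y}$: given $\mathbf{Y}=\by$, the statistic $n^{-1/2}\sum_i y_iX_i$ is a linear functional $\sum_i a_iX_i$ of the subcritical Ising field $\mathbf{X}$ with bounded coefficients $a_i=y_i\in\{\pm1\}$, for which a CLT holds---e.g.\ by the cumulant/moment method, since at high temperature the truncated (Ursell) correlations decay exponentially and all standardized cumulants of order $\ge3$ vanish in the limit. The conditional variance $n^{-1}\sum_{i,j}y_iy_j\langle X_iX_j\rangle_{\beta_1}=\sum_k\langle\sigma_0\sigma_k\rangle_{\beta_1}\big(n^{-1}\sum_i y_iy_{i+k}\big)$ converges in probability to $v^2$ by the spatial ergodic theorem for the $\mathbf{Y}$-field (the unique subcritical Gibbs state is translation-ergodic), so combining the conditional CLT with the convergence of this random variance (dominated via the summable decay bound) yields the unconditional limit $N(0,v^2)$. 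Establishing the linear-statistic CLT uniformly enough in the coefficient configuration---or, equivalently, bounding the joint cumulants of the product field $\{X_iY_i\}$ directly by factoring $\mathbb{E}[\prod X_i\prod Y_i]=\mathbb{E}[\prod X_i]\,\mathbb{E}[\prod Y_i]$---is the technical heart of the argument.

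Finally, the statement for $\rho_n$ follows by Slutsky. Since $X_i^2=1$, the normalizer is exact: $n^{-1}\sum_i(X_i-\overline{\mathbf{X}}_n)^2=1-\overline{\mathbf{X}}_n^2\to1$ in probability, and likewise for $\mathbf{Y}$, so $\sqrt n\,\rho_n=\sqrt n\,T_n\big/\big[(1-\overline{\mathbf{X}}_n^2)^{1/2}(1-\overline{\mathbf{Y}}_n^2)^{1/2}\big]$ has the same limit $N(0,v^2)$ as $\sqrt n\,T_n$.
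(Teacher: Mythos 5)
Your proposal is correct in substance and reaches all four conclusions, but it takes a genuinely different route to the central limit theorem than the paper does. The paper's proof avoids any bespoke CLT argument by invoking a ready-made CLT for mixing random fields (Theorem 6 of Martin, 1973), applied with coefficients $\frac{1}{\sqrt n}(X_i-\overline{\mathbf{X}}_n)$ against the field $Y_i$; its only real work is then a finite-$n$ variance computation, showing $v_n = 1+\frac1n\sum_{i\neq j}\mathrm{Cov}^2(X_i,X_j)+o(1)$ by bookkeeping the centering cross-terms with the $O(n)$ susceptibility bound. You instead strip the centering up front (your $\sqrt n\,\overline{\mathbf{X}}_n\overline{\mathbf{Y}}_n=o_p(1)$ step is a cleaner way to dispose of those cross-terms), condition on $\mathbf{Y}$, and propose to prove the conditional CLT for the linear statistic $n^{-1/2}\sum_i y_iX_i$ by the cumulant method, with the random conditional variance converging via ergodicity of the subcritical Gibbs state. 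Your approach is more self-contained and yields the explicit infinite-volume formula $v^2=\sum_{k}\langle\sigma_0\sigma_k\rangle_{\beta_1}\langle\sigma_0\sigma_k\rangle_{\beta_2}$, which the paper deliberately leaves implicit; the paper's approach buys a much shorter argument at the cost of outsourcing the CLT. For strict positivity the two routes also differ: you lower-bound the nearest-neighbor two-point function via Griffiths inequalities, while the paper gets a quantitative bound $\mathrm{Cov}(X_i,X_j)\ge p/2$ through the Edwards--Sokal coupling and a random-cluster comparison with Bernoulli percolation; both are valid. Monotonicity via the second GKS inequality is essentially identical to the paper's citation of a covariance-monotonicity lemma, and the Slutsky step for $\rho_n$ is the same in both.

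One caveat deserves attention: your cumulant step, as stated, asserts exponential decay of Ursell functions ``at high temperature,'' but the theorem claims the full subcritical range $\beta<\beta_c$. Two-point exponential decay throughout $\beta<\beta_c$ is available (Aizenman--Barsky--Fern\'andez; Duminil-Copin--Tassion), but to kill all cumulants of order $\ge 3$ you need summability of truncated correlations of every fixed order on the whole subcritical regime, which requires tree-diagram bounds from the random-current representation (or a comparable device) rather than a cluster expansion; similarly, your ergodic-theorem step for $n^{-1}\sum_i y_iy_{i+k}$ is stated for the infinite-volume stationary field, whereas $\mathbf{Y}$ lives in a finite box, so a finite-volume-to-infinite-volume approximation (or a direct second-moment/variance bound, again using fourth-order truncated correlations, e.g.\ via the Lebowitz-inequality sandwich) is needed. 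You flag both points as the technical heart, which is honest, but filling them is where the real work lies --- and it is precisely the work the paper sidesteps by citing an off-the-shelf mixing CLT.
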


Obtaining the precise limiting constant $v^2$ in Theorem \ref{thm:lattice_inflation} is an important question and we leave it as future research direction. Since $v^2>1$, our result demonstrates that both covariance and correlation exhibit variance inflation. Moreover, in Section \ref{sec:simulations} we present  evidence from simulations suggesting that $v^2$ is strictly increasing in $\beta_1,\beta_2$ for $0<\beta_1,\beta_2<\beta_c$. This result indicates that a researcher ignoring the dependence in $\bX$ and $\mathbf{Y}$ would perform anticonservative inference, with \textit{type I error increasing as a function of dependence}; this is a direct analogue to nonsense correlations in time series.

\subsection{Complete and dense graphs}
The Ising model on 
the complete graph, where $E_n=V_n\times V_n$, is known as the Curie-Weiss model~\citep{kac1969mathematical,nishimori2001statistical}. This model corresponds to having $\bQ_n:=\bQ^{\rm CW}= \frac{1}{n}(\mathbf{1}\mathbf{1}^\top-I)$. Similar to the lattice Ising model, the Curie-Weiss model also undergoes a phase transition at $\beta_c= 1$. Namely, $\overline{\bX}_n\xrightarrow[]{p}0$ for $\beta \le 1$ and $\overline{\bX}_n\xrightarrow{d} \frac{1}{2}(\delta_m +\delta_{-m})$ if $\beta>1$, i.e. an equal mixture of two Dirac delta random variables with mass on $\pm m$ where $m=m(\beta)$ is the unique positive root of the equation $m=\tanh(\beta m)$. 
Under this model, we precisely characterize the asymptotic distribution of sample covariance $T_n$ and sample correlation $\rho_n$.

\begin{thm}\label{thm:cw_tn}
 Suppose $\bX\sim \bP_{\beta_1, \bQ^{\text{CW}}}$, $\mathbf{Y} \sim \bP_{\beta_2, \bQ^{\text{CW}}}$ and $\bX$ independent of $\mathbf{Y}$. Then 
 \begin{equation}\label{eq:cw_clt}
     \sqrt{n} T_n \xrightarrow{d} N(0,(1-m^2_1)(1-m^2_2)),
 \end{equation}
 where if $\beta_i \le 1$, then $m_i=0$ and otherwise $m_i$ is the unique positive root of the equation $m=\tanh(\beta_i m)$, $i=1,2$. 
 Noting that $m_i$ must lie in $[-1,1]$, this implies $\lim\limits_{n \rightarrow \infty}\text{Var}(\sqrt{n}T_n)=1$ iff $\beta_1,\beta_2 \le 1$, otherwise $\lim\limits_{n \rightarrow \infty} \text{Var}(\sqrt{n}T_n) < 1$.
\end{thm}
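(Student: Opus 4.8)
The plan is to reduce the Curie--Weiss model to a mixture of i.i.d.\ models via the Hubbard--Stratonovich (de Finetti--type) representation and then run a conditional central limit theorem. First I would rewrite the statistic in centered product form,
\[
\sqrt{n}\,T_n=\frac{1}{\sqrt n}\sum_{i=1}^n(X_i-\overline{\bX}_n)(Y_i-\overline{\mathbf Y}_n).
\]
The crucial structural input is that for $\bQ^{\mathrm{CW}}$ the density depends on $\bz$ only through $S_n=\sum_i z_i$, so introducing a Gaussian auxiliary field through the identity $\exp(\tfrac{\beta}{2n}S_n^2)=\ee_{Z}\exp(\sqrt{\beta/n}\,Z S_n)$ with $Z\sim N(0,1)$ makes the coordinates of $\bX$ conditionally i.i.d.\ given the field: writing $h_X=\sqrt{\beta_1/n}\,Z$, the $X_i$ are i.i.d.\ with mean $\tanh h_X$ and variance $1-\tanh^2 h_X$, and $h_X$ has density proportional to $\exp\!\big(n[-h^2/(2\beta_1)+\log(2\cosh h)]\big)$. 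The same applies to $\bY$ with $\beta_2$, independently.

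Next I would analyze the statistic conditionally on the two fields $(h_X,h_Y)$. Writing $\xi_i=X_i-\tanh h_X$ and $\eta_i=Y_i-\tanh h_Y$, a short computation gives $\sum_i(X_i-\overline{\bX}_n)(Y_i-\overline{\mathbf Y}_n)=\sum_i\xi_i\eta_i-n\bar\xi\bar\eta$, so that
\[
\sqrt n\,T_n=\frac{1}{\sqrt n}\sum_{i=1}^n\xi_i\eta_i-\sqrt n\,\bar\xi\,\bar\eta .
\]
Conditionally on $(h_X,h_Y)$ the summands $\xi_i\eta_i$ are i.i.d.\ with mean $0$ and variance $(1-\tanh^2h_X)(1-\tanh^2 h_Y)$ (using $\bX\perp\bY$), so the classical CLT yields $\tfrac{1}{\sqrt n}\sum_i\xi_i\eta_i\xrightarrow{d}N\big(0,(1-\tanh^2 h_X)(1-\tanh^2 h_Y)\big)$, while $\bar\xi,\bar\eta=O_p(n^{-1/2})$ make the second term $O_p(n^{-1/2})$ and hence negligible by Slutsky.

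Finally I would remove the conditioning. A Laplace/large-deviation analysis of the field density shows $h_X$ concentrates on the global maximizers of $h\mapsto -h^2/(2\beta_1)+\log(2\cosh h)$: at $0$ when $\beta_1\le1$ and at $\pm h_1^{*}$ with $h_1^{*}=\beta_1\tanh h_1^{*}$ when $\beta_1>1$. In either regime $\tanh^2 h_X\to m_1^2$, the key point being that the two low-temperature phases $\pm h_1^{*}$ give the \emph{same} value $m_1^2=\tanh^2 h_1^{*}$, so the sign ambiguity of the magnetization washes out. Passing to characteristic functions, $\ee[e^{it\sqrt n T_n}]=\ee_{h_X,h_Y}\big[\ee[e^{it\sqrt n T_n}\mid h_X,h_Y]\big]$, the inner expectation converges to $\exp(-\tfrac{t^2}{2}(1-\tanh^2 h_X)(1-\tanh^2 h_Y))$ and is bounded by $1$; combining pointwise convergence with the concentration of $(h_X,h_Y)$ and dominated convergence gives $\ee[e^{it\sqrt n T_n}]\to\exp(-\tfrac{t^2}{2}(1-m_1^2)(1-m_2^2))$, which is \eqref{eq:cw_clt}. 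The variance dichotomy then follows since $(1-m_1^2)(1-m_2^2)=1$ iff $m_1=m_2=0$ iff $\beta_1,\beta_2\le1$, and is strictly smaller otherwise; to upgrade the limiting variance to convergence of $\var(\sqrt n T_n)$ one checks uniform integrability of $(\sqrt n T_n)^2$, e.g.\ via a bounded fourth-moment estimate.

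I expect the main obstacle to be this last step: justifying the interchange of the $n\to\infty$ limit with the average over the field, because the field law itself depends on $n$. The conditional CLT holds for each fixed $(h_X,h_Y)$, but I must control it uniformly as $(h_X,h_Y)$ ranges over the shrinking effective support of the concentrating field. I would handle this with a Berry--Esseen bound for the i.i.d.\ sum $\tfrac{1}{\sqrt n}\sum_i\xi_i\eta_i$ that is uniform for $h_X,h_Y$ in a fixed neighborhood of the maximizers (the third absolute moment of $\xi_i\eta_i$ is bounded uniformly there), which makes the convergence of the inner characteristic function uniform on the region carrying asymptotically all of the field mass and legitimizes the dominated-convergence passage.
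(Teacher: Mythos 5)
Your proposal is correct and follows essentially the same route as the paper's proof: your Hubbard--Stratonovich field is exactly the auxiliary variable of Lemma \ref{lem:aux_var}, and the paper likewise combines the conditionally i.i.d.\ structure with concentration of the field (and of $\overline{\bX}_n$) at $0$ or $\pm m$ to get the limiting variance $(1-m_1^2)(1-m_2^2)$. The only cosmetic differences are that the paper conditions on $(\bX, Z_2)$ rather than two fields, computes the conditional moment generating function exactly via a $\log\cosh$ Taylor expansion (so MGF convergence also delivers the variance statement directly), and handles your flagged limit-interchange issue by boundedness and uniform integrability instead of a Berry--Esseen bound.
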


The implication of the above theorem is two-fold: first, it shows that the distibution of (scaled) sample covariance is asymptotically normal irrespective of the choice of $\beta_1,\beta_2$. Second, it characterizes asymptotic variance as a function of $\beta$ and shows a phase transition at $\beta_i = 1$, $i=1,2$. In fact, there is a \textit{variance deflation} unless $\max\{\beta_1,\beta_2\} \le 1$. To illustrate our findings, we plot the asymptotic variance in Figure 1 when $\beta_1=\beta_2$.

\begin{figure}[h]\label{fig:var(T)}
	\begin{center}
	\includegraphics[scale=0.3]{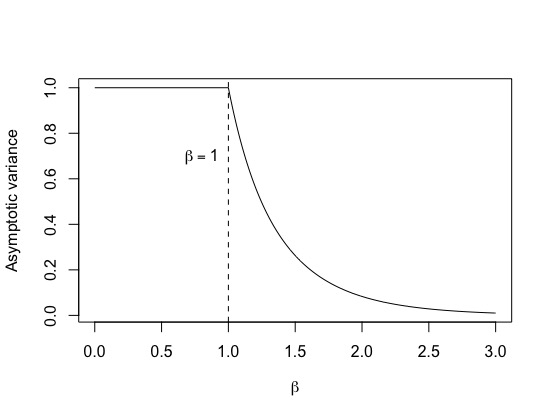}
\caption{Asymptotic variance of $\sqrt{n}T_n$ when $\beta_1=\beta_2=\beta$.}
	\end{center}
\end{figure}

The proof of Theorem \ref{thm:cw_tn} shows how the result can be generalized to a large family of graphs as long as one of the variables is sampled from a Curie-Weiss model. More precisely, if $\bX \sim \bP_{\beta_1,\bQ}$ satisfies $\overline{\bX}^2_n \xrightarrow{\bP} m^2_1$ and $\mathbf{Y} \sim \bP_{\beta_2, \bQ^{\text{CW}}}$ independent of $\bX$, then $\frac{1}{n} \sum_{i=1}^{n}(X_i -\overline{\bX}_n)^2 \xrightarrow{\bP} 1-m^2_1$ and the conclusion of Theorem \ref{thm:cw_tn} still holds. Example of such interaction matrices $\bQ$ are scaled adjacency matrices of dense mean field graphs (cf. \cite[Theorem 1.1]{deb2023fluctuations}) which includes Erd\H{o}s-R\'{e}nyi graphs, balanced stochastic block models, sparse regular graphons, block spin Ising models and sparse graphs like lattice Ising models.

A consequence of Theorem \ref{thm:cw_tn} is that the asymptotic distribution of sample correlation $\rho_n$ is independent of the choice of $\beta_1, \beta_2$. The result is stated via the following corollary:
\begin{corollary}\label{cor:cw_correlation}
Suppose $\bX\sim \bP_{\beta_1, \bQ^{\text{CW}}}$, $\mathbf{Y} \sim \bP_{\beta_2, \bQ^{\text{CW}}}$ and $\bX,\mathbf{Y}$ are independent. Then 
 \begin{equation}
     \sqrt{n} \rho_n \xrightarrow{d} N(0,1),
 \end{equation}
 for any $\beta_1,\beta_2 \ge 0$.
\end{corollary}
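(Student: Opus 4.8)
The plan is to write $\sqrt{n}\rho_n$ as $\sqrt{n}T_n$ divided by the product of the two empirical standard deviations and then invoke Slutsky's theorem together with Theorem \ref{thm:cw_tn}. Since $nT_n = \sum_{i=1}^n (X_i - \overline{\bX}_n)(Y_i - \overline{\mathbf{Y}}_n)$, dividing the numerator and denominator of $\rho_n$ by $n$ gives
\[
\sqrt{n}\,\rho_n = \frac{\sqrt{n}\,T_n}{\sqrt{\tfrac1n\sum_{i=1}^n(X_i-\overline{\bX}_n)^2}\,\sqrt{\tfrac1n\sum_{i=1}^n(Y_i-\overline{\mathbf{Y}}_n)^2}}.
\]
Theorem \ref{thm:cw_tn} already supplies the limit of the numerator, $\sqrt{n}T_n \xrightarrow{d} N(0,(1-m_1^2)(1-m_2^2))$, so the entire argument reduces to identifying the in-probability limit of the denominator.

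For the denominator I would exploit that the spins are $\pm 1$: since $X_i^2 = 1$ for every $i$, the empirical variance collapses to
\[
\frac1n\sum_{i=1}^n(X_i-\overline{\bX}_n)^2 = \frac1n\sum_{i=1}^n X_i^2 - \overline{\bX}_n^2 = 1 - \overline{\bX}_n^2,
\]
and likewise $\tfrac1n\sum_{i=1}^n(Y_i-\overline{\mathbf{Y}}_n)^2 = 1 - \overline{\mathbf{Y}}_n^2$. The Curie--Weiss law of large numbers recalled above --- $\overline{\bX}_n \xrightarrow{\bP} 0$ when $\beta_1 \le 1$ and $\overline{\bX}_n^2 \xrightarrow{\bP} m_1^2$ when $\beta_1 > 1$, and symmetrically for $\mathbf{Y}$ --- then yields $\tfrac1n\sum_i(X_i-\overline{\bX}_n)^2 \xrightarrow{\bP} 1 - m_1^2$ and $\tfrac1n\sum_i(Y_i-\overline{\mathbf{Y}}_n)^2 \xrightarrow{\bP} 1 - m_2^2$, which is exactly the remark following Theorem \ref{thm:cw_tn}. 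Hence the denominator converges in probability to $\sqrt{(1-m_1^2)(1-m_2^2)}$.

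Combining the two displays by Slutsky's theorem gives $\sqrt{n}\rho_n \xrightarrow{d} N(0,(1-m_1^2)(1-m_2^2))/\sqrt{(1-m_1^2)(1-m_2^2)} = N(0,1)$ for any fixed $\beta_1,\beta_2 \ge 0$. The one point that genuinely needs care --- and the only place the argument could break --- is verifying that the limiting denominator is strictly positive, so that the division is legitimate and Slutsky applies. This holds because for any finite $\beta_i$ the relevant root satisfies $m_i \in (-1,1)$ strictly (the curve $m \mapsto \tanh(\beta_i m)$ meets the diagonal before $m=1$), whence $1 - m_i^2 > 0$; the degenerate case $1 - m_i^2 = 0$ would arise only in the zero-temperature limit $\beta_i = \infty$, which is excluded here. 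Since nothing in the argument retains any dependence on $\beta_1,\beta_2$ after the cancellation, the limiting law is the universal $N(0,1)$ regardless of the temperatures.
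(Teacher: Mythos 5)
Your proposal is correct and follows essentially the same route as the paper: the paper's proof of Corollary \ref{cor:cw_correlation} likewise combines Theorem \ref{thm:cw_tn} for the numerator with the convergence $\frac{1}{n}\sum_{i=1}^{n}(X_i-\overline{\bX}_n)^2 = 1-\overline{\bX}_n^2 \xrightarrow{\bP} 1-m_1^2$ (equation \eqref{eq:x_var_limit}) and Slutsky's theorem. Your explicit check that $1-m_i^2>0$ for finite $\beta_i$, which the paper leaves implicit, is a worthwhile addition but not a different argument.
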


The asymptotic variance of sample correlation remains equal to $1$ for any values of $\beta_1,\beta_2$. It is notable that in this setting, unlike the lattice setting, \textit{the asymptotic variances of the sample covariance and correlation differ in nature}. A practitioner unaware of dependence in their data could still perform valid inference about the association between $X$ and $Y$ using the sample correlation, but not necessarily using the sample covariance.

These results for the Curie-Weiss model can be extended to mean-field type models (cf. \citep{basak2015universality,jain2018mean}) for dense regular graphs. We make the following assumptions on the dependency matrix $\bQ_n$:
\begin{ass}\label{ass:reg}
    $\bQ_n$ is a regular matrix, i.e., for all $i=1,2, \ldots, n$, we have $\sum_{j=1}^{n} Q_{n,ij}=1$.
\end{ass}
\begin{ass}\label{ass:entry_bound}
    There exists a constant $\kappa>0$ such that $$\max_{1 \le i,j \le n} Q_{n,ij} < \frac{\kappa}{n}.$$
\end{ass}
\begin{ass}\label{ass:frob_conv}
    There exists a constant $\gamma \in \mathbb{R}$ such that
    $$\|\bQ_n\|^2_{F}=\sum_{i,j=1}^{n} Q_{n,ij} \rightarrow \gamma.$$
\end{ass}
We will need to assume the sequence of matrices $\{\bQ_n\}$ converge in cut-metric. To this end, we have to introduce the basics of graphons and cut-distance convergence and refer the curious readers to \cite{borgs2008convergent,borgs2012convergent,lovasz2012large}.\par 
A \textit{graphon} is a symmetric bounded measurable function from $[0,1]^2$ to $[0,1]$. Let $\mathcal{W}$ be the set of all graphons. One can equip the space of graphons $\mathcal{W}$ by the cut-distance, given by
\begin{equation*}
    d_{\square}(W_1,W_2):= \sup_{S,T \subseteq [0,1]}\Big|\int_{S \times T} (W_1(x,y)-W_2(x,y))dxdy\Big|,
\end{equation*}
where the supremum is taken over all measurable subsets $S,T$ of $[0,1]$. Define the weak cut distance by $$\delta_\square(W_1,W_2):= \inf_{\sigma} (W^\sigma_1,W_2)= \inf_{\sigma} (W_1,W^\sigma_2)$$ where $\sigma$ ranges from all measure preserving bijections $[0,1]\rightarrow [0,1]$ and $W^\sigma(x,y)=W(\sigma(x),\sigma(y))$. \par
Given an $n \times n$ matrix $A_n$, we can define the corresponding graphon $W^{A_n}$ as a piecewise constant function given by
$$W^{A_n}(x,y)= A_{n, \lceil nx \rceil, \lceil ny \rceil}.$$
We call a set of matrices $A_n$, $n \in \bN$ \textit{converge in cut distance} to a graphon $W \in \mathcal{W}$ if $\delta_{\square}(W^{A_n},W) \rightarrow 0$.\par
Finally, given $W \in \mathcal{W}$, define its Hilbert-Schmidt operator $T_W$ from $[0,1]$ to $[0,1]$ given by 
\begin{equation*}
    T_W(f)(x)= \int_{[0,1]} W(x,y)f(y) dy, \quad x\in [0,1].
\end{equation*}
Since the operator $T_W$ is a compact operator, it is has countably many eigenvalues, which we denote by $\lambda_1, \lambda_2, \ldots$ sorted by decreasing order in absolute values $|\lambda_1| \ge |\lambda_2| \ge \ldots$.\par
Now, we are in a position to state our final assumption on the sequence of matrices $\{\bQ_n\}$:
\begin{ass}\label{ass:graphon_limit}
    The sequence of matrices $\{n\bQ_n\}$ converges in cut distance to a graphon $W$ (i.e., $\delta_{\square}(W^{n \bQ_n},W) \rightarrow 0$). Moreover, if $T_W$ has eigenvalues $\lambda_1, \lambda_2, \ldots$, we have $\lambda_1 > \sup_{i \ge 2} \lambda_i$.
\end{ass}
Let us briefly discuss the spectral gap condition we have assumed here. By our regularity Assumption \ref{ass:reg}, the largest eigenvalue of $\bQ_n$ is exactly $1$. This, along with \cite[Theorem 11.54]{lovasz2012large} yields that $\lambda_1=1$. Hence, we are assuming $\sup_{i \ge 2}\lambda_i <1$. Since $|\lambda_i| \le 1$, we allow the possibility that $\lambda_2=-1$, which can happen for regular connected bipartite graphs. One can expect any universality result regarding the asymptotic distribution of sample covariance might fail if one does not assume such a spectral gap condition. \par 
Before we mention our main result of this section, we briefly provide some examples of matrices $\bQ_n$ which satisfy our Assumptions \ref{ass:reg}-\ref{ass:graphon_limit}. The examples arise as the scaled adjacency matrices of regular graphs. First, note that the previously discussed Curie-Weiss model, $\bQ_n = \frac{1}{n} (\mathbf{1}\mathbf{1}^\top-I)$ satisfies all the assumptions. This is because the corresponding graphons $f^{\bQ_n}$ converges in cut distance to the constant graphon $W \equiv 1$, yielding $\lambda_1=1$ and $\lambda_i=0$ for $i \ge 2$. Second, the scaled adjacency matrix of the complete bipartite graph also satisfies the Assumptions \ref{ass:reg}-\ref{ass:graphon_limit}. To see this, note that,
\begin{equation*}
    Q_{n,ij}= \begin{cases}
         \frac{2}{n} \text{    if } 1\leq i \leq \frac{n}{2} \text{ and } \frac{n}{2}+1 \leq j \leq n \\
        \frac{2}{n} \text{    if } 1\leq j \leq \frac{n}{2} \text{ and } \frac{n}{2}+1 \leq j \leq n \\
         0 \text{ otherwise.}
    \end{cases}
\end{equation*}
This is the (scaled) adjacency matrix of a regular graph with degree $\frac{n}{2}$. Also, $n\bQ_n$ converges, in cut norm to the limiting graphon $W$ given by
\begin{equation*}
    W(x,y) = \begin{cases}
        2 \text{ if } 0<x<\frac{1}{2} \text{ and } \frac{1}{2} <y <1, \\
        2 \text{ if } \frac{1}{2}<x<1 \text{ and } 0<y< \frac{1}{2} ,\\
        0 \text{ otherwise}.
    \end{cases}
\end{equation*}
This limiting graphon $W$ has two non-zero eigenvalues $\lambda_1= 1, \lambda_2= -1$ satisying Assumption \ref{ass:graphon_limit}. In fact, our assumptions continue to hold true for general complete multipartite graphs. For our final example, we consider random $d_n$ regular graphs, where $\frac{d_n}{n} \rightarrow d \in (0,1)$ and $Q_{n,ij}= \frac{1}{d_n} \mathbf{1}_{(i,j) \in E_n}$. By definition, $\bQ_n$ is regular, with $n\bQ_n$ converging in cut-norm to the constant graphon $W \equiv 1$ \cite[Theorem 1.1]{chatterjee2011random}, hence the only non-zero eigenvalue is $\lambda_1=1$.

Note that the Ising model with interaction matrix $\bQ_n$ satisfying Assumptions \ref{ass:reg}-\ref{ass:graphon_limit} satisfies the exact same phase transition as the Curie-Weiss model $\bP_{\beta,\bQ^{\rm CW}}$ at $\beta_c=1$. We now state our universality result. 

\begin{thm}\label{prop:er_var}
    Let $\{\bQ_n\}$ be a sequence of matrices satisfying Assumptions \ref{ass:reg}-\ref{ass:graphon_limit}. Suppose $\bX\sim \bP_{\beta_1, \bQ_n}$ and $\mathbf{Y} \sim \bP_{\beta_2, \bQ_n}$, independent of $\bX$. Then   $\sqrt{n} T_n \xrightarrow{d} N(0,(1-m^2_1)(1-m^2_2))$,
 where $m_i=0$ if $\beta_i \le 1$, and otherwise $m_i$ is the unique positive root of the equation $m=\tanh(\beta_i m)$, $i=1,2$.
 Moreover, the sample correlation has asymptotic distribution $\sqrt{n} \rho_n \xrightarrow{d} N(0,1)$.
\end{thm}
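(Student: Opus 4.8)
The plan is to reduce the correlation claim to the covariance claim, and then to prove the covariance central limit theorem by conditioning on $\mathbf{Y}$ and invoking a central limit theorem for a \emph{linear} statistic of the Ising field $\mathbf{X}$. For the reduction, since $X_i^2=Y_i^2=1$ we have $\frac1n\sum_i(X_i-\overline{\mathbf{X}}_n)^2=1-\overline{\mathbf{X}}_n^2$ and $\frac1n\sum_i(Y_i-\overline{\mathbf{Y}}_n)^2=1-\overline{\mathbf{Y}}_n^2$. By the remark preceding the theorem, an Ising model with $\bQ_n$ satisfying Assumptions~\ref{ass:reg}--\ref{ass:graphon_limit} has the same magnetization phase transition as the Curie--Weiss model, so $\overline{\mathbf{X}}_n^2\xrightarrow{p}m_1^2$ and $\overline{\mathbf{Y}}_n^2\xrightarrow{p}m_2^2$; hence the two normalizing sums converge in probability to $1-m_1^2$ and $1-m_2^2$. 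Granting the covariance limit $\sqrt n\,T_n\xrightarrow{d}N(0,(1-m_1^2)(1-m_2^2))$, Slutsky's theorem yields $\sqrt n\,\rho_n\xrightarrow{d}N(0,1)$, so it suffices to prove the covariance CLT.

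Using $\sum_i(Y_i-\overline{\mathbf{Y}}_n)=0$, write
\[
\sqrt n\,T_n=\frac1{\sqrt n}\sum_{i=1}^n a_iX_i,\qquad a_i:=Y_i-\overline{\mathbf{Y}}_n,
\]
and condition on $\mathbf{Y}=\mathbf{y}$. The resulting deterministic weights satisfy $\sum_i a_i=0$, $|a_i|\le 2$, and, for $\mathbf{y}$ typical under $\bP_{\beta_2,\bQ_n}$, $\frac1n\sum_i a_i^2\to 1-m_2^2$ with negligible projection onto any fixed eigenvector of $\bQ_n$. For such weights I would prove the conditional statement $\frac1{\sqrt n}\sum_i a_iX_i\xrightarrow{d}N\!\left(0,(1-m_1^2)\tau^2\right)$, $\tau^2=\lim\frac1n\sum_i a_i^2$, by Stein's method via exchangeable pairs for a mean-field Ising field (in the spirit of \cite[Theorem 1.1]{deb2023fluctuations}); the weak per-edge coupling $Q_{n,ij}=O(1/n)$ from Assumption~\ref{ass:entry_bound} is what renders the exchangeable-pair remainder negligible. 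The limiting variance is identified from the exact conditional second moment $\frac1n\sum_{i,j}a_ia_j\,\mathrm{Cov}(X_i-\overline{\mathbf{X}}_n,X_j-\overline{\mathbf{X}}_n)$: the diagonal part $\frac1n\sum_i a_i^2\,\var(X_i-\overline{\mathbf{X}}_n)\to(1-m_1^2)\tau^2$, reflecting the single-site conditional variance $\sech^2(\beta_1(\bQ_n\mathbf{X})_i)=1-\tanh^2(\beta_1(\bQ_n\mathbf{X})_i)\to 1-m_1^2$ under the uniform concentration $(\bQ_n\mathbf{X})_i\to\pm m_1$, while the centered off-diagonal covariances vanish in the normalized sum. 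In the supercritical case $\beta_1>1$ one conditions in addition on $\mathrm{sign}(\overline{\mathbf{X}}_n)$; because $\sum_i a_i=0$ the limiting variance is identical in the two pure phases, so the unconditional limit remains Gaussian.

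To remove the conditioning, set $\phi_n(t\mid\mathbf{y}):=\mathbb{E}\!\left[\exp\!\left(\I t\,n^{-1/2}\sum_i a_iX_i\right)\mid\mathbf{Y}=\mathbf{y}\right]$. The conditional CLT gives $\phi_n(t\mid\mathbf{Y})\to\exp\!\left(-\tfrac12 t^2(1-m_1^2)(1-\overline{\mathbf{Y}}_n^2)\right)$, and since $\overline{\mathbf{Y}}_n^2\xrightarrow{p}m_2^2$ and $|\phi_n|\le 1$, bounded convergence gives $\mathbb{E}[\phi_n(t\mid\mathbf{Y})]\to\exp\!\left(-\tfrac12 t^2(1-m_1^2)(1-m_2^2)\right)$, i.e. $\sqrt n\,T_n\xrightarrow{d}N(0,(1-m_1^2)(1-m_2^2))$.

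The main obstacle is precisely this conditional CLT, uniformly over the typical random weight vectors $a$ produced by $\mathbf{Y}$: one must show the fluctuations are Gaussian with variance exactly $(1-m_1^2)\tau^2$, with no residual dependence on the direction of $a$. This requires (i) uniform concentration of the local fields $(\bQ_n\mathbf{X})_i$ valid across the subcritical, critical, and supercritical regimes, and (ii) control of the non-principal spectral modes of $\bQ_n$, for which the spectral gap in Assumption~\ref{ass:graphon_limit}, together with $a\perp\mathbf 1$ and the vanishing projection of $a$ onto any fixed eigenspace, forces the spectral corrections to disappear in the limit.
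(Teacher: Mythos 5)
Your Slutsky reduction of $\rho_n$ to $T_n$, your identification of the limiting variance, and your observation that $\sum_i a_i=0$ kills the phase dependence when $\beta_1>1$ all match the paper. But the core of your argument --- the quenched conditional CLT $n^{-1/2}\sum_i a_i X_i \xrightarrow{d} N(0,(1-m_1^2)\tau^2)$ for $\bX\sim\bP_{\beta_1,\bQ_n}$ with \emph{general} $\bQ_n$ and (typical) weights $a=\mathbf{Y}-\overline{\mathbf{Y}}_n\mathbf{1}$ --- is asserted rather than proved, and it is not available off the shelf: the result you invoke in the spirit of \cite[Theorem 1.1]{deb2023fluctuations} concerns fluctuations of the magnetization $\overline{\bX}_n$, not linear statistics with arbitrary weights orthogonal to $\mathbf{1}$. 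Your claim that the exchangeable-pair remainder is negligible merely because $Q_{n,ij}=O(1/n)$ cannot be right as stated: at $\beta_1=1$ the Curie--Weiss magnetization fluctuates at scale $n^{-1/4}$ with a non-Gaussian limit (Lemma \ref{lem:cw_clt}(c)), so weak per-edge coupling alone does not produce Gaussian behavior; one must quantitatively exploit both $a\perp\mathbf{1}$ and the near-kernel localization of $a$ (in effect $\frac1n\mathbf{Y}^\top B_n^2\mathbf{Y}\xrightarrow{\bP}0$), uniformly across the subcritical, critical, and supercritical regimes. You acknowledge this as ``the main obstacle,'' but that obstacle \emph{is} the theorem: as written, the proposal concentrates all of the difficulty in an unproven lemma.

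It is instructive that the paper conditions in the opposite direction precisely to avoid ever proving a CLT for a linear statistic of a general mean-field Ising vector. First, Lemma \ref{lem:iid_to_cw} replaces $\mathbf{Y}$'s law by Curie--Weiss, where the auxiliary-variable decoupling (Lemma \ref{lem:aux_var}) makes $Y_1,\dots,Y_n$ conditionally i.i.d.; conditioning on $\bX$, the needed CLT is then for a sum of (conditionally) independent terms, established by an explicit MGF computation for Gaussian surrogates plus a moment-comparison universality step (Lemma \ref{lem:iid_convergence}), with the only inputs about the general-$\bQ_n$ field $\bX$ being concentration facts ($\frac1n\mathbf{X}^\top B_n\mathbf{X}\xrightarrow{\bP}0$, $\frac1n\mathbf{X}^\top B_n^2\mathbf{X}\xrightarrow{\bP}0$, $1-\overline{\bX}_n^2\xrightarrow{\bP}1-m_1^2$), not a CLT. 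Second, the transfer from $\bP_{\beta_2,\bQ^{\rm CW}}$ back to $\bP_{\beta_2,\bQ_n}$ is done by mutual contiguity and Le Cam's third lemma \cite{xu2022ising}: the log-likelihood ratio is a function of $\mathbf{Y}^\top B_n\mathbf{Y}$ alone, and the lemmas establish joint convergence of $(\sqrt{n}T_n,\mathbf{Y}^\top B_n\mathbf{Y})$ with asymptotic independence, so the limit law of $\sqrt{n}T_n$ is unchanged under the measure swap. If you want to salvage your route, you would need to prove your conditional CLT from scratch (including at criticality); alternatively, adopting the change-of-measure device reduces everything to independent-summand CLTs and concentration, which is why the paper's lemmas track the pair $(\sqrt{n}T_n,\mathbf{Y}^\top B_n\mathbf{Y})$ rather than $T_n$ alone.
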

The above result shows that the asymptotic distribution of (scaled) sample covariance is the same for all dense regular graphs (under Assumptions 1-4). Therefore, the phenomenon we observed in the previous section, i.e., variance deflation for sample covariance, is not limited to complete graphs. It is very natural to expect such a result to continue to hold for other mean-field graphs like dense Erd\H{o}s-R\'{e}nyi graphs. However, the current proof crucially relies on the fact that $\bQ_n$ is \textit{exactly} regular and we leave any such extension for future research. \par

These results illustrate an important and surprising dichotomy between dense regular graphs and lattice Ising models. While on lattices we see the familiar phenomenon of variance inflation for both sample correlation and sample covariance, analogous to nonsense correlations in time series, the dense regime  exhibits a variance \textit{deflation} for the sample covariance and no change in distribution for sample correlation.



\section{Results for continuous data}\label{sec:continuous}

We now turn our attention to Gaussian graphical models $\bX, \mathbf{Y} \sim N(0,\Sigma)$, corresponding to $\mathcal{X} =\mathbb{R}$ and $\bQ_n= -\frac{1}{2} \Sigma^{-1}$ in model \eqref{eq:general_density}. 

In addition to studying the asymptotic behavior of $T_n$ and $\rho_n$ under this model, we also study the behavior of the coefficient $\beta$ for $X$ in a simple linear regression of $Y$ on $X$, allowing $\mathbf{X}$ and $\mathbf{Y}$ to have different covariance matrices: $\mathbf{Y} \sim N(0,\Sigma_Y)$ and $\mathbf{X} \sim N(0,\Sigma_X)$.
We find that, surprisingly and contrary to dozens of presentations on OLS, inference about $\beta$ need not be anti-conservative when the regression errors exhibit positive dependence.  If $\Sigma_X$ is diagonal, or if a particular condition holds on the product of the eigenvalues of $\Sigma_X$ and $\Sigma_Y$, then inference under OLS is exact, i.e. it achieves the nominal type I error rate.

We find conditions under which $T_n$ and $\rho_n$ are asymptotically normal, and, in line with the phenomenon of nonsense correlations, we find that $\rho_n$ exhibits variance inflation. However, surprisingly, under these same conditions $T_n$ may exhibit either variance inflation or deflation.

\subsection{Correlation and Covariance}

Let $\bX, \mathbf{Y} \sim N(0,\Sigma)$ and define $J:= I_n - \frac{1}{n} \mathbf{1}\mathbf{1}^\top$, where $\mathbf{1}$ is the $n$ length vector of all $1$'s. The asymptotic distribution of $T_n$ and $\rho_n$ will depend on the eigenvalues of $\widetilde{\Sigma}:= \Sigma^{1/2} J \Sigma^{1/2}$. Let $\tilde{\lambda}_1 \ge \tilde{\lambda}_2 \ldots \ge \tilde{\lambda}_n \ge0$ be the eigenvalues of $\widetilde{\Sigma}$ 
in non-increasing order.
The asymptotic distributions of $T_n$ and $\rho_n$ depend on whether the eigenvalues are all of similar order. We formalize this notion in the following Theorem.

\begin{thm}\label{lem:ggm}
Suppose $\bX, \mathbf{Y} \sim N(0,\Sigma)$ are independent. 
Define $a_n:= \frac{\sum_{i=1}^n\tilde{\lambda}_i}{\sqrt{ n\sum_{i=1}^n\tilde{\lambda}^2_i}}$.
\begin{enumerate}
    \item[(i)] If $\tilde{\lambda}^2_1 \ll \sum_{i=1}^{n}\tilde{\lambda}^2_i$, then
    \begin{equation*}\label{eq:small_eval}
        \frac{nT_n}{\sqrt{\sum_{i=1}^n\tilde{\lambda}^2_i}}\xrightarrow{d} N(0,1)
    \end{equation*}
and
\begin{equation*}\label{eq:ggm_rho}
       \sqrt{n} \rho_n a_n \xrightarrow{d} N(0,1);
    \end{equation*}
hence, $\liminf_{n \rightarrow \infty} \text{Var} (\sqrt{n} \rho_n) \ge 1$.
    \item[(ii)] On the other hand, if $\tilde{\lambda}_1 \gg n \tilde{\lambda}_2$, then 
    \begin{equation*}\label{eq:big_eval}
        \frac{nT_n} {\tilde\lambda_1} \xrightarrow{d} AB, 
    \end{equation*}
    where $A \sim N(0,1)$, $B \sim \sqrt{\chi^2_1}$ and $A,B$ are independent. Moreover, $\rho_n$ converges weakly to a Rademacher distribution, i.e.,
    \begin{equation*}
        \rho_n \xrightarrow{d} \frac{1}{2} (\delta_{1}+\delta_{-1}).
    \end{equation*}
\end{enumerate}
\end{thm}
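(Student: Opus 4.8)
The plan is to diagonalize everything through $\widetilde{\Sigma}$. Writing $\bX=\Sigma^{1/2}\mathbf g$ and $\mathbf Y=\Sigma^{1/2}\mathbf h$ with $\mathbf g,\mathbf h$ independent standard Gaussians, and using that $J$ is a symmetric idempotent (so $J^\top J=J$), I would record the three identities $nT_n=\mathbf g^\top\widetilde{\Sigma}\,\mathbf h$, $\|J\bX\|^2=\mathbf g^\top\widetilde{\Sigma}\,\mathbf g$, and $\|J\mathbf Y\|^2=\mathbf h^\top\widetilde{\Sigma}\,\mathbf h$. Introducing an orthonormal eigenbasis $\{u_i\}$ of $\widetilde{\Sigma}$ and setting $\xi_i:=u_i^\top\mathbf g$, $\eta_i:=u_i^\top\mathbf h$, rotational invariance of the standard Gaussian makes $(\xi_i)$ and $(\eta_i)$ two independent i.i.d.\ $N(0,1)$ sequences, and the statistics become the weighted sums $nT_n=\sum_i\tilde{\lambda}_i\,\xi_i\eta_i$, $\|J\bX\|^2=\sum_i\tilde{\lambda}_i\xi_i^2$, $\|J\mathbf Y\|^2=\sum_i\tilde{\lambda}_i\eta_i^2$. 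This reduction is the crux; everything afterward is an asymptotic analysis of these sums.

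For part (i), the summands $W_i:=\tilde{\lambda}_i\xi_i\eta_i$ are independent, mean zero, with $\mathrm{Var}(W_i)=\tilde{\lambda}_i^2$ and $\mathbb E W_i^4=9\tilde{\lambda}_i^4$. With $s_n^2=\sum_i\tilde{\lambda}_i^2$, the Lyapunov ratio is bounded by $9\sum_i\tilde{\lambda}_i^4/s_n^4\le 9\tilde{\lambda}_1^2 s_n^2/s_n^4=9\tilde{\lambda}_1^2/s_n^2\to 0$ precisely under the hypothesis $\tilde{\lambda}_1^2\ll\sum_i\tilde{\lambda}_i^2$, so the Lyapunov CLT gives $nT_n/s_n\xrightarrow{d}N(0,1)$. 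For $\rho_n$ I would show the two normalizers concentrate: $\|J\bX\|^2$ has mean $\sum_i\tilde{\lambda}_i$ and variance $2\sum_i\tilde{\lambda}_i^2$, and since the same hypothesis forces $\tilde{\lambda}_1\ll\sum_i\tilde{\lambda}_i$ (because $\sum_i\tilde{\lambda}_i^2\le\tilde{\lambda}_1\sum_i\tilde{\lambda}_i$), Chebyshev gives $\|J\bX\|^2/\sum_i\tilde{\lambda}_i\xrightarrow{P}1$ and likewise for $\mathbf Y$. Writing $\sqrt n\rho_n a_n=(nT_n/s_n)\cdot\sum_i\tilde{\lambda}_i/\sqrt{\|J\bX\|^2\|J\mathbf Y\|^2}$ and applying Slutsky then yields $\sqrt n\rho_n a_n\xrightarrow{d}N(0,1)$. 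Finally $\rho_n$ is symmetric about $0$, so $x\mapsto x^2$ and the portmanteau inequality give $\liminf\mathrm{Var}(\sqrt n\rho_n a_n)\ge 1$; dividing by $a_n^2\le 1$ (Cauchy--Schwarz) yields $\liminf\mathrm{Var}(\sqrt n\rho_n)\ge 1$.

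For part (ii) the hypothesis $\tilde{\lambda}_1\gg n\tilde{\lambda}_2$ means the leading term dominates. Factoring out $\tilde{\lambda}_1$, the tail $R:=\tilde{\lambda}_1^{-1}\sum_{i\ge 2}\tilde{\lambda}_i\xi_i\eta_i$ has variance $\tilde{\lambda}_1^{-2}\sum_{i\ge 2}\tilde{\lambda}_i^2\le n\tilde{\lambda}_2^2/\tilde{\lambda}_1^2=(\sqrt n\,\tilde{\lambda}_2/\tilde{\lambda}_1)^2\to 0$, so $nT_n/\tilde{\lambda}_1=\xi_1\eta_1+o_P(1)$; and since a product of two independent standard normals has the law of $AB$ with $A\sim N(0,1)$, $B\sim\sqrt{\chi^2_1}$ independent (conditioning on $Z$ shows $A|Z|\stackrel{d}{=}AZ$), this is exactly the claimed limit. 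The same factoring gives $\|J\bX\|^2/\tilde{\lambda}_1=\xi_1^2+o_P(1)$ and $\|J\mathbf Y\|^2/\tilde{\lambda}_1=\eta_1^2+o_P(1)$, as those tails are nonnegative with mean $\le n\tilde{\lambda}_2/\tilde{\lambda}_1\to 0$. Hence the triple $(nT_n/\tilde{\lambda}_1,\ \|J\bX\|^2/\tilde{\lambda}_1,\ \|J\mathbf Y\|^2/\tilde{\lambda}_1)$ converges jointly in distribution to $(\xi_1\eta_1,\xi_1^2,\eta_1^2)$, on which $\xi_1^2,\eta_1^2>0$ almost surely, so the continuous mapping theorem applied to $(a,b,c)\mapsto a/\sqrt{bc}$ gives $\rho_n\xrightarrow{d}\mathrm{sign}(\xi_1)\,\mathrm{sign}(\eta_1)$, a Rademacher variable.

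I expect the main obstacle to be the correlation statements rather than the covariance CLTs: in part (i), verifying that the hypothesis also controls the normalizer fluctuations (i.e.\ deducing $\tilde{\lambda}_1/\sum_i\tilde{\lambda}_i\to 0$ from $\tilde{\lambda}_1^2\ll\sum_i\tilde{\lambda}_i^2$) and then passing from the weak limit to the variance bound via lower semicontinuity of the second moment; and in part (ii), ensuring the degeneracy of the denominator is harmless, which is precisely why the continuous-mapping step must invoke the almost-sure positivity of $\xi_1^2$ and $\eta_1^2$.
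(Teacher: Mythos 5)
Your proposal is correct, and although it shares the paper's skeleton---diagonalizing through $\widetilde{\Sigma}$ so that $nT_n=\sum_i\tilde{\lambda}_i\xi_i\eta_i$, $\|J\mathbf{X}\|^2=\sum_i\tilde{\lambda}_i\xi_i^2$, $\|J\mathbf{Y}\|^2=\sum_i\tilde{\lambda}_i\eta_i^2$---it executes the one load-bearing step of part (i) by a genuinely different mechanism. The paper never proves a CLT at all: it conditions on $\mathbf{X}$, uses that $nT_n\mid\mathbf{X}\sim N\left(0,\mathbf{X}^\top J\Sigma J\mathbf{X}\right)$ \emph{exactly}, and reduces everything to concentration of the conditional variance $\mathbf{Z}^\top\widetilde{\Sigma}^2\mathbf{Z}$ via its quadratic-form lemma (Lemma \ref{lem:bilinear_concentration}), finishing with Slutsky; the hypothesis $\tilde{\lambda}_1^2\ll\sum_i\tilde{\lambda}_i^2$ enters only through that concentration. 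You instead prove an unconditional Lyapunov CLT for the independent triangular array $\tilde{\lambda}_i\xi_i\eta_i$, where the identical quantity $9\tilde{\lambda}_1^2/\sum_i\tilde{\lambda}_i^2$ reappears as the Lyapunov ratio. The paper's conditioning buys exact conditional Gaussianity, which also makes the $AB=\xi_1\eta_1$ limit in part (ii) drop out of the same computation (there $nT_n/\sqrt{\mathbf{X}^\top J\Sigma J\mathbf{X}}$ is standard normal independent of $\mathbf{X}$, and only $\zeta_n\xrightarrow{d}\chi^2_1$ needs proof); your route is marginally more robust in that the Lyapunov step only uses fourth moments of the eigencoordinates, at the price of redoing part (ii) by hand---which you do, with tail-variance and tail-mean bounds matching the paper's, correctly noting that the numerator needs only $\tilde{\lambda}_1\gg\sqrt{n}\,\tilde{\lambda}_2$ while the denominators use the full $\tilde{\lambda}_1\gg n\tilde{\lambda}_2$. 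Your remaining steps (deducing $\tilde{\lambda}_1\ll\sum_i\tilde{\lambda}_i$ from the hypothesis, Chebyshev concentration of the normalizers, Slutsky, the sign-product Rademacher limit) coincide with the paper's; in two spots you are in fact more careful than the printed proof, which asserts $\liminf_n\mathrm{Var}(\sqrt{n}\rho_n)\ge1$ without justifying the passage from weak convergence to second moments (your symmetry-of-$\rho_n$ plus Fatou/portmanteau argument supplies this) and applies the quotient map in part (ii) without remarking that continuous mapping requires the almost-sure positivity of $\xi_1^2\eta_1^2$ at the limit.
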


Since $a_n \le 1$, the Theorem implies variance inflation for the sample correlation. In fact, for any $ \sigma^2\ge 1$, one can construct a sequence of covariance matrix $\{\Sigma_n\}$ such that $\sqrt{n} \rho_n \xrightarrow{d} N(0, \sigma^2)$. To see this, set $l_0= \lceil \frac{n}{\sigma^2} \rceil$ and set $\tilde \lambda_1 = \ldots= \tilde \lambda_l=1$ and $\tilde\lambda_i=0$ if $i > l_0$. Then $a_n \rightarrow \frac{1}{\sigma}$. Hence, if $\frac{1}{\sqrt{n}}\mathbf{1}, v_1,\ldots, v_{n-1}$ forms an orthonormal basis, choosing $\Sigma_n:= \frac{1}{n}\mathbf{1}\mathbf{1}^\top+ \sum_{i=2}^n \tilde\lambda_{i-1} v_iv^\top_i$ would yield $\sqrt{n}\rho_n \xrightarrow{d} N(0,\sigma^2)$.

 Although the sample correlation exhibits variance inflation, the  sample covariance can exhibit either variance inflation or deflation. Corollary \ref{lem:contagion} below provides an example of variance deflation; to see example of variance inflation choose $\tilde \lambda_1 = \sqrt{n} \log n$ and $\tilde{\lambda}^2_1 \ll \sum_{i=1}^{n}\tilde{\lambda}^2_i$. This will yield $\lim\limits_{n \rightarrow \infty} \text{Var} (\sqrt{n} T_n) \rightarrow \infty$. In future work it will be of interest to characterize the settings in which $T_n$ and $\rho_n$ show divergent asymptotic behavior, with the latter exhibiting variance inflation while the former exhibits deflation.

Theorem \ref{lem:ggm} part (ii) shows more pathological behavior when one eigenvalue is of larger order than the others. In this case the limiting distribution of $T_n$ is not normal and the variance will typically be larger than $1$, and the sample correlation converges to a bimodal Rademacher distribution centered at, but concentrated away from, the true value of 0. These phenomena are analogous to nonsense associations in other settings, where measures of association have been shown to follow asymptotic distributions that are overdispersed relative to the Normal distribution \cite{ernst2017yule} or to be concentrated away from the truth \cite{lee2021network}.

For the special case of equicorrelation we have the following Corollary to Theorem 4: 
\begin{corollary}\label{lem:contagion}
Suppose $\bX,\mathbf{Y} \sim N(0, \Sigma)$ are independent, where $\Sigma$ is an equicorrelation matrix, i.e., $\Sigma= (1-\rho)I + \rho \mathbf{1} \mathbf{1}^\top$, $0<\rho<1$.
\begin{enumerate} 
    \item[(i)] The sample covariance satisfies $\sqrt{n}T_n \xrightarrow{d} N(0,(1-\rho)^2)$. 
    \item[(ii)] $\sqrt{n}\rho_n \xrightarrow{d} N(0,1)$.
\end{enumerate}
\end{corollary}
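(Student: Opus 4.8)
The plan is to obtain this corollary as a direct specialization of Theorem~\ref{lem:ggm}(i), so the only genuine work is to diagonalize $\widetilde\Sigma = \Sigma^{1/2} J \Sigma^{1/2}$ explicitly in the equicorrelation case. First I would record that $\Sigma = (1-\rho)I + \rho\,\mathbf{1}\mathbf{1}^\top$ and $J$ share an eigenbasis: the vector $\mathbf{1}$ is an eigenvector of $\Sigma$ with eigenvalue $1+(n-1)\rho$, while every vector orthogonal to $\mathbf{1}$ is an eigenvector with eigenvalue $1-\rho$. Writing $P := \tfrac1n \mathbf{1}\mathbf{1}^\top$ for the projection onto $\mathbf{1}$ and noting $J = I - P$, this gives $\Sigma^{1/2} = \sqrt{1+(n-1)\rho}\,P + \sqrt{1-\rho}\,J$. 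Because $JP = PJ = 0$ and $J^2 = J$, multiplying out collapses the $P$ term: $J\Sigma^{1/2} = \sqrt{1-\rho}\,J$, and hence
\[
\widetilde\Sigma \;=\; \Sigma^{1/2} J \Sigma^{1/2} \;=\; (1-\rho)\,J .
\]
Thus $\widetilde\Sigma$ has $n-1$ eigenvalues equal to $1-\rho$ together with a single zero eigenvalue, so $\tilde\lambda_1 = \cdots = \tilde\lambda_{n-1} = 1-\rho$ and $\tilde\lambda_n = 0$.

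Next I would check the hypothesis of part~(i) and plug in. Since $\tilde\lambda_1^2 = (1-\rho)^2$ while $\sum_{i=1}^n \tilde\lambda_i^2 = (n-1)(1-\rho)^2$, the ratio $\tilde\lambda_1^2 / \sum_i \tilde\lambda_i^2 = 1/(n-1) \to 0$, so indeed $\tilde\lambda_1^2 \ll \sum_i \tilde\lambda_i^2$ and the first regime of Theorem~\ref{lem:ggm} applies. Substituting $\sqrt{\sum_i \tilde\lambda_i^2} = (1-\rho)\sqrt{n-1}$ into the covariance limit yields $\tfrac{nT_n}{(1-\rho)\sqrt{n-1}} \xrightarrow{d} N(0,1)$; multiplying through by $\sqrt{n-1}/\sqrt{n} \to 1$ and invoking Slutsky gives $\sqrt{n}\,T_n \xrightarrow{d} N(0,(1-\rho)^2)$, which is part~(i). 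For part~(ii) I would compute the normalizing factor $a_n$ from the same theorem: with $\sum_i \tilde\lambda_i = (n-1)(1-\rho)$ and $\sqrt{n\sum_i \tilde\lambda_i^2} = (1-\rho)\sqrt{n(n-1)}$, one finds $a_n = \sqrt{(n-1)/n} \to 1$. Since Theorem~\ref{lem:ggm}(i) gives $\sqrt{n}\,\rho_n a_n \xrightarrow{d} N(0,1)$, a second application of Slutsky delivers $\sqrt{n}\,\rho_n \xrightarrow{d} N(0,1)$.

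The argument is essentially mechanical once the spectral identity $\widetilde\Sigma = (1-\rho)J$ is in hand, and that identity is the one step requiring care: it rests on $\Sigma^{1/2}$ and $J$ being simultaneously diagonalizable with $\mathbf{1}$ as their common eigenvector, which is exactly the special structure of the equicorrelation model and is what forces the flat spectrum $\{1-\rho,\dots,1-\rho,0\}$. The main conceptual point worth flagging is that although $a_n < 1$ for every finite $n$ (consistent with the general bound $\liminf_n \operatorname{Var}(\sqrt{n}\,\rho_n) \ge 1$ from Theorem~\ref{lem:ggm}), here $a_n \to 1$, so equicorrelation sits precisely at the boundary where the sample correlation shows \emph{no} asymptotic variance inflation, even as the covariance limit $(1-\rho)^2 < 1$ exhibits genuine deflation.
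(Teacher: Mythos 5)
Your proposal is correct and follows essentially the same route as the paper: both reduce the corollary to Theorem~\ref{lem:ggm}(i) by identifying the spectrum of $\widetilde\Sigma$ as $\tilde\lambda_1=\cdots=\tilde\lambda_{n-1}=1-\rho$, $\tilde\lambda_n=0$, then verify $\tilde\lambda_1^2 \ll \sum_i \tilde\lambda_i^2$ and compute $a_n \to 1$. The only cosmetic difference is that you derive the identity $\widetilde\Sigma=(1-\rho)J$ explicitly via the spectral decomposition $\Sigma^{1/2}=\sqrt{1+(n-1)\rho}\,P+\sqrt{1-\rho}\,J$, whereas the paper gets the same eigenvalues from $J\Sigma=(1-\rho)J$ together with the fact that $\Sigma^{1/2}J\Sigma^{1/2}$ and $J\Sigma$ share eigenvalues.
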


The above result exhibits an interesting phenomenon: the asymptotic variance of $\sqrt{n}T_n$ is decreasing in $\rho$ but the asymptotic variance of $\sqrt{n}\rho_n$ is independent of the choice of $\rho$.

\subsection{Ordinary Least squares\protect\footnote{The authors are grateful to Youjin Lee, Brian Gilbert, and Abhirup Datta for their work on simulations and informal results that preceded the formal results below.}
}

Let $\mathbf{X}$ and $\mathbf{Y}$ be multivariate normal with simultaneously diagonalizable variance matrices $\Sigma_Y=V\Lambda_YV^T$ and $\Sigma_X=V\Lambda_XV^T$. Then, letting $\boldsymbol{\varepsilon}$ be the error after subtracting the least squares projection of $\mathbf{Y}$ onto $\bX$, $\boldsymbol{\varepsilon}\perp\bX$ is also multivariate normal and with simultaneously diagonalizable with variance matrix $\Sigma_\varepsilon=V \Lambda_\varepsilon V^T$ and 
\begin{align*}
\mathbf{Y}=\mathbf{X}\beta+\boldsymbol{\varepsilon}.
\end{align*}
 In order to derive interpretable results, we assume $\lambda_{X,i}=f(i/n)$ and $\lambda_{\varepsilon,i}=g(i/n)$ for integrable functions $f,g: [0,1]\rightarrow \mathbb{R}_+$.





 We will assess when inference based on standard i.i.d. inference is nevertheless valid, via an asymptotic comparison of the naive  to the true variance of  $\hat{\beta}_{\rm ols}$, where the estimated (naive) standard error is given by the standard i.i.d. formula  $\widehat{\text{Var}_{\text{naive}}}= \frac{\boldsymbol{\varepsilon}^T(\mathbb{I}-P_{\bX})\boldsymbol{\varepsilon}}{n \|\bX\|^2 } =\frac{\boldsymbol{\varepsilon}^T(\|\bX\|^2- \bX\bX^\top)\boldsymbol{\varepsilon}}{n \|\bX\|^4 }$,

\begin{thm}\label{thm:ols} Assume $\mathbf{X}$ and $\mathbf{Y}$  are simultaneously diagonalizable with eigenvalues as described above. Then
 naive i.i.d. inference for $\beta$ is valid, achieving asymptotic Type I error less than or equal to a nominal level, if and only if the following condition holds:
\begin{align}\label{OLScondition}
 \int f(x)g(x)dx \leq \int f(x)dx\int g(x)dx.
\end{align}
    
\end{thm}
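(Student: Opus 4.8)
The plan is to reduce everything to a pair of independent standard Gaussian vectors, run a law of large numbers on the resulting weighted chi-square sums, and then read off the asymptotic variance of the studentized statistic. First I would record that $\hat\beta_{\rm ols}=\mathbf{X}^\top\mathbf{Y}/\|\mathbf{X}\|^2$, so that under $\mathbf{Y}=\mathbf{X}\beta+\boldsymbol{\varepsilon}$ we have $\hat\beta_{\rm ols}-\beta=\mathbf{X}^\top\boldsymbol{\varepsilon}/\|\mathbf{X}\|^2$, and that the naive test rejects when $|t_n|>z_{1-\alpha/2}$ for $t_n:=(\hat\beta_{\rm ols}-\beta)/\sqrt{\widehat{\mathrm{Var}_{\mathrm{naive}}}}$; validity at every nominal level is therefore equivalent to the asymptotic variance of $t_n$ being at most $1$. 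Using $\boldsymbol{\varepsilon}\perp\mathbf{X}$ and simultaneous diagonalizability I would write $\mathbf{X}=V\Lambda_X^{1/2}\mathbf{g}$ and $\boldsymbol{\varepsilon}=V\Lambda_\varepsilon^{1/2}\mathbf{h}$ with $\mathbf{g},\mathbf{h}\sim N(0,\mathbb{I})$ independent; since $V$ is orthogonal, every quadratic form collapses to a weighted sum, e.g. $\|\mathbf{X}\|^2=\sum_i f(i/n)g_i^2$, $\|\boldsymbol{\varepsilon}\|^2=\sum_i g(i/n)h_i^2$, $\mathbf{X}^\top\Sigma_\varepsilon\mathbf{X}=\sum_i f(i/n)g(i/n)g_i^2$, and $\mathbf{X}^\top\boldsymbol{\varepsilon}=\sum_i\sqrt{f(i/n)g(i/n)}\,g_ih_i$.

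Next I would establish the elementary limits, writing $\overline f:=\int_0^1 f$, $\overline g:=\int_0^1 g$, and $\overline{fg}:=\int_0^1 fg$. Each of $\tfrac1n\|\mathbf{X}\|^2$, $\tfrac1n\|\boldsymbol{\varepsilon}\|^2$, and $\tfrac1n\mathbf{X}^\top\Sigma_\varepsilon\mathbf{X}$ is an average of independent terms whose mean is a Riemann sum (converging to $\overline f$, $\overline g$, $\overline{fg}$ respectively) and whose variance is $O(1/n)$, so Chebyshev gives convergence in probability to $\overline f$, $\overline g$, $\overline{fg}$. I would also show the rank-one correction in the naive variance is negligible: $\mathbf{X}^\top\boldsymbol{\varepsilon}$ has second moment $\sum_i f(i/n)g(i/n)=O(n)$, hence $(\mathbf{X}^\top\boldsymbol{\varepsilon})^2/\|\mathbf{X}\|^2=O_P(1)$, so that $\boldsymbol{\varepsilon}^\top(\mathbb{I}-P_{\mathbf{X}})\boldsymbol{\varepsilon}=\|\boldsymbol{\varepsilon}\|^2(1+o_P(1))$ and consequently $n\,\widehat{\mathrm{Var}_{\mathrm{naive}}}\xrightarrow{P}\overline g/\overline f$.

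The distributional step comes from conditioning on $\mathbf{X}$. Setting $U:=\mathbf{X}^\top\boldsymbol{\varepsilon}/\|\mathbf{X}\|$, the vector $\boldsymbol{\varepsilon}$ is Gaussian given $\mathbf{X}$, so $U\mid\mathbf{X}\sim N(0,v_n)$ exactly, with $v_n=\mathbf{X}^\top\Sigma_\varepsilon\mathbf{X}/\|\mathbf{X}\|^2\xrightarrow{P}\overline{fg}/\overline f$. A short computation gives
\begin{equation*}
t_n=\frac{U\sqrt n}{\sqrt{\|\boldsymbol{\varepsilon}\|^2-U^2}}=\frac{U}{\sqrt{v_n}}\cdot\sqrt{\frac{n\,v_n}{\|\boldsymbol{\varepsilon}\|^2-U^2}}.
\end{equation*}
The first factor is exactly $N(0,1)$ conditional on $\mathbf{X}$, hence marginally $N(0,1)$ for every $n$; the second factor converges in probability to the constant $\sqrt{\overline{fg}/(\overline f\,\overline g)}$ by the limits above. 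Slutsky then yields $t_n\xrightarrow{d}N(0,\tau^2)$ with $\tau^2=\overline{fg}/(\overline f\,\overline g)$. The asymptotic size $P(|N(0,\tau^2)|>z_{1-\alpha/2})$ is at most $\alpha$ at every level if and only if $\tau^2\le1$, i.e. iff $\overline{fg}\le\overline f\,\overline g$, which is exactly \eqref{OLScondition}; equality makes $\tau^2=1$ and the test exact, and a diagonal $\Sigma_X$ forces $f$ constant and hence $\tau^2=1$.

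I expect the main obstacle to be the interplay between numerator and denominator of $t_n$: both depend on the same noise $\boldsymbol{\varepsilon}$ (through $\mathbf{h}$), so a naive Slutsky argument is not immediately justified. The conditioning-on-$\mathbf{X}$ device is what resolves this, because the standardized numerator $U/\sqrt{v_n}$ is exactly standard normal irrespective of $\mathbf{X}$, leaving only the denominator to concentrate; I would need the concentration of $\|\boldsymbol{\varepsilon}\|^2-U^2$ to hold conditionally on $\mathbf{X}$ (uniformly enough to pass to the limit), which follows from the same second-moment bounds once one checks that the $U^2$ term and the fluctuations of $\|\boldsymbol{\varepsilon}\|^2$ are both of lower order than $n\overline g$. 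A secondary technical point is that the stated integrability of $f,g$ must be strong enough to make the variances of the weighted chi-square sums vanish (square-integrability suffices), which I would verify or assume explicitly.
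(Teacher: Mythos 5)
Your proposal is correct and takes essentially the same route as the paper's proof: the same Chebyshev/Riemann-sum limits $\tfrac{1}{n}\|\mathbf{X}\|^2\xrightarrow{\bP}\int f$, $\tfrac{1}{n}\|\boldsymbol{\varepsilon}\|^2\xrightarrow{\bP}\int g$, $\tfrac{1}{n}\mathbf{X}^\top\Sigma_\varepsilon\mathbf{X}\xrightarrow{\bP}\int fg$, the same negligibility of the rank-one correction in $\boldsymbol{\varepsilon}^\top(\mathbb{I}-P_{\mathbf{X}})\boldsymbol{\varepsilon}$, and the same comparison of the limiting naive variance $\int g/\int f$ against the true conditional variance $\int fg/(\int f)^2$, giving the ratio $\int fg/(\int f\int g)$ as the decisive quantity. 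Your only addition is to make the type-I-error conclusion explicit by conditioning on $\mathbf{X}$ (so $U/\sqrt{v_n}$ is exactly standard normal) and applying Slutsky to get $t_n\xrightarrow{d}N\bigl(0,\int fg/(\int f\int g)\bigr)$ --- a step the paper leaves implicit --- and this is sound as written, since Slutsky requires no independence between the two factors, so the obstacle you flag resolves exactly as you anticipate.
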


 Notably, this condition is met whenever $\bX$ is i.i.d., even if $\mathbf{Y}$ and $\varepsilon$ are not. In this case ratio of the true to the naive variance is asymptotically equal to $1$, meaning that inference using the naive variance estimator will asymptotically exactly achieve the nominal type I error rate. 

If $f,g$ are both increasing or decreasing, \eqref{OLScondition} is violated (by the FKG inequality), yielding anticonservative inference. However, if one is increasing and the other is decreasing, then the condition is met and niave i.i.d. inference is valid.


Note that even when \eqref{OLScondition} holds and naive inference is valid or exact, this inference is not typically \textit{efficient} (unless $\Sigma_\varepsilon$ is the identity matrix). The efficient estimator in these cases is generalized least squares (GLS). 
Under some conditions the estimated regression coefficient from generalized least squares (GLS) will coincide with that from ordinary least squares (OLS) \citep{puntanen1989equality}; in future work it will be interesting to explore the relationship between the conditions under which OLS inference is valid and the conditions under which OLS estimates are efficient.

\section{Numerical Experiments}\label{sec:simulations}

Figures \ref{fig:correlation} and \ref{fig:covariance} illustrate the claims in Theorem 3. We fixed $G_n$ to be a $1000$ by $1000$ lattice and used the Potts package in R to simulate the Ising model on $G$, varying $\beta$ between $0$ and $\beta_c=1.76$. 

Figure \ref{fig:Thm4} illustrates the Rademacher distribution of Theorem 4 (ii).

Figure \ref{fig:OLS} shows confidence interval of $\beta$ under $X, Y$ multivariate normal with non-diagonal covariance matrix. The columns correspond to $\Sigma_X$ having eigenvalues $(i/n)^2$, $i=1,2,\ldots,n$ and $\Sigma_Y$ have eigenvalues (a) $e^{i/n^{0.85}}$, (b) $e^{i/n}$, (c) $e^{-i/n}$, (d) $e^{-i/n^{0.85}}$, (e) $\Sigma_X=I$. In (a) and (b) we have anticonservative inference, in (c) and (d) we have conservative inference, and in (e) inference is exact (though not efficient). This illustrates the theoretical findings in Theorem \ref{thm:ols}.

\begin{figure}[h]
	\begin{center}
	\includegraphics[width=.75\linewidth]{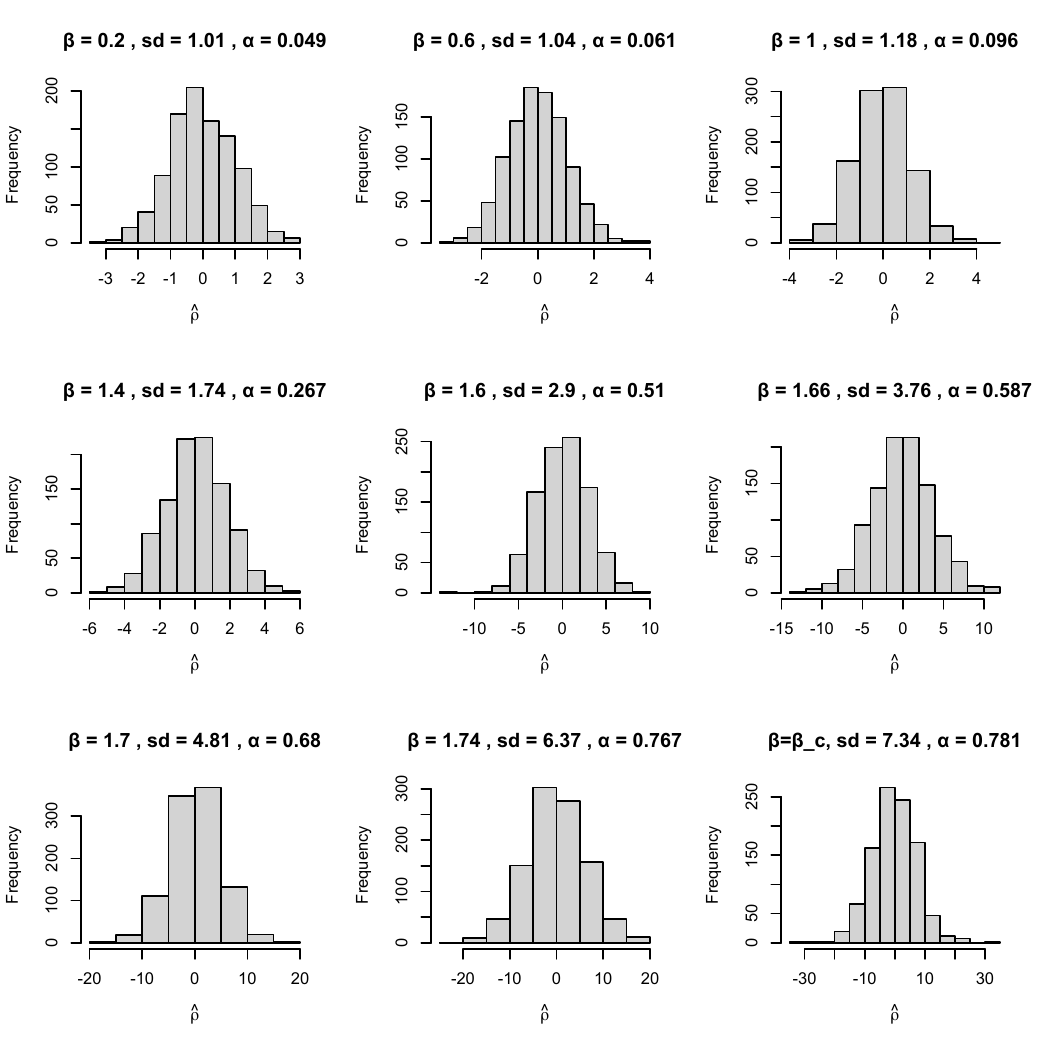}
\caption{Empirical distribution of $\sqrt{n}\rho_n$ over 1000 simulation replicates with $n=10,000$. The inverse temperature parameter $\beta$ ranges from $0$ (i.i.d. data) to the critical value. The standard deviation of the distribution increases monotonically with $\beta$, as does the type I error rate $\alpha$ for a naive null hypothesis test of $\rho=0$.}\label{fig:correlation}
	\end{center}
\end{figure}

\begin{figure}[h]
	\begin{center}
	\includegraphics[width=.75\linewidth]{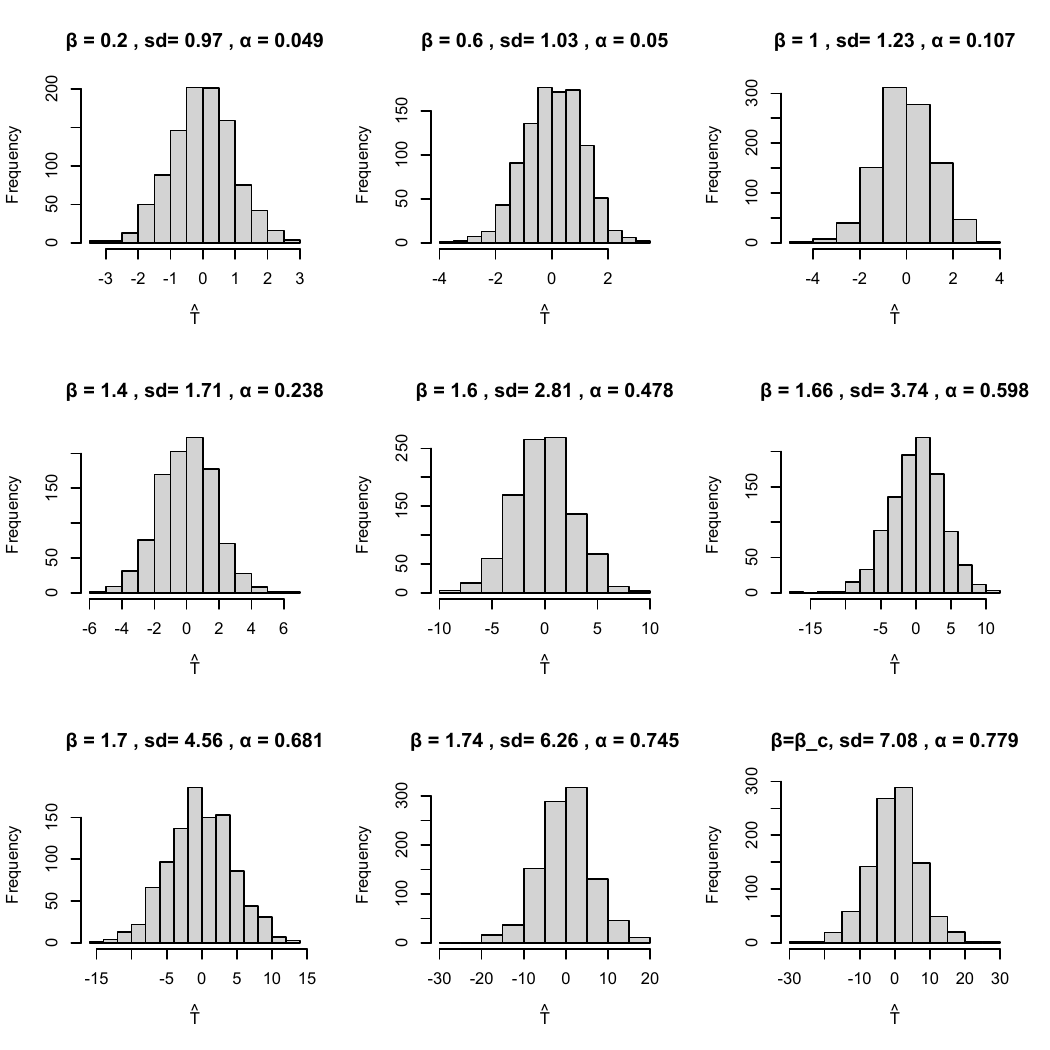}
\caption{Empirical distribution of $\sqrt{n}T_n$ over 1000 simulation replicates with $n=10,000$. The inverse temperature parameter $\beta$ ranges from $0$ (i.i.d. data) to the critical value. The standard deviation of the distribution increases monotonically with $\beta$, as does the type I error rate $\alpha$ for a naive null hypothesis test of $T=0$.}\label{fig:covariance}
	\end{center}
\end{figure}

\begin{figure}[h]
	\begin{center}
	\includegraphics[width=.7\linewidth]{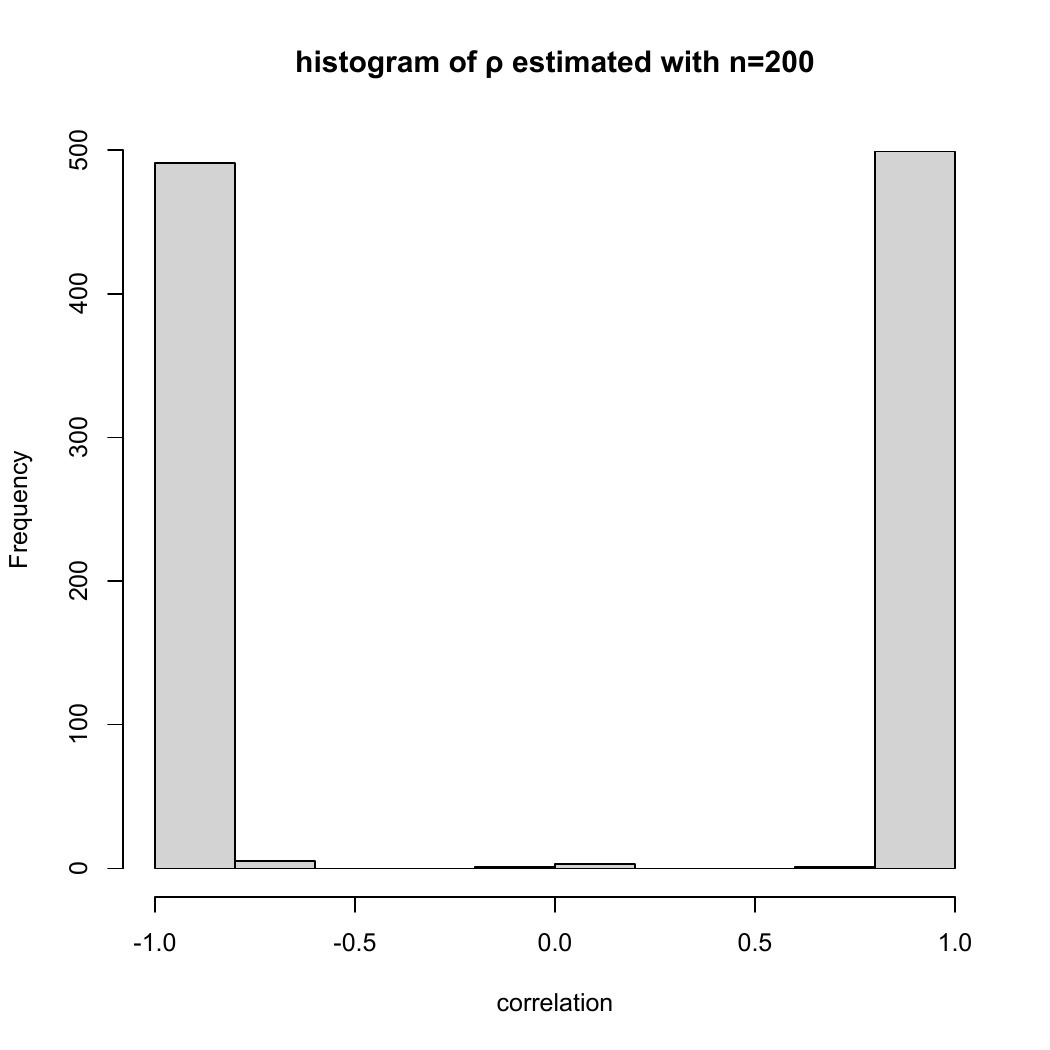}
\caption{Empirical distribution of $\sqrt{n}\rho_n$ over 1000 simulation replicates with $n=200$ with $\mathbf{X},\mathbf{Y} \sim N(0,\Sigma)$ with $\tilde{\lambda_1}\gg n\tilde{\lambda_2.} $}\label{fig:Thm4}
	\end{center}
\end{figure}

\begin{figure}
\centering
\begin{subfigure}{.4\textwidth}
\includegraphics[width=0.9\textwidth]{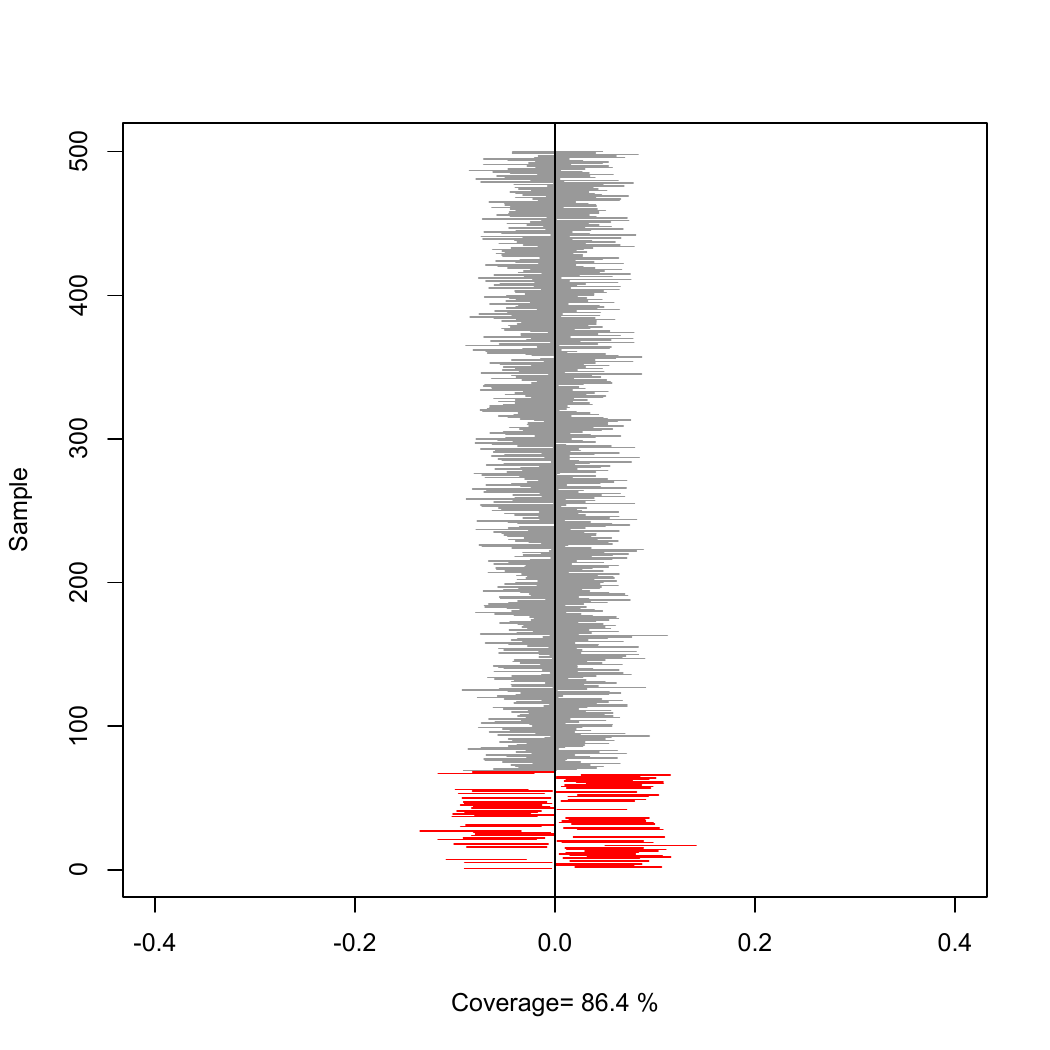}
  \caption{\tiny{Anticonservative inference}}
\end{subfigure}%
\begin{subfigure}{.4\textwidth}
\includegraphics[width=.9\textwidth]{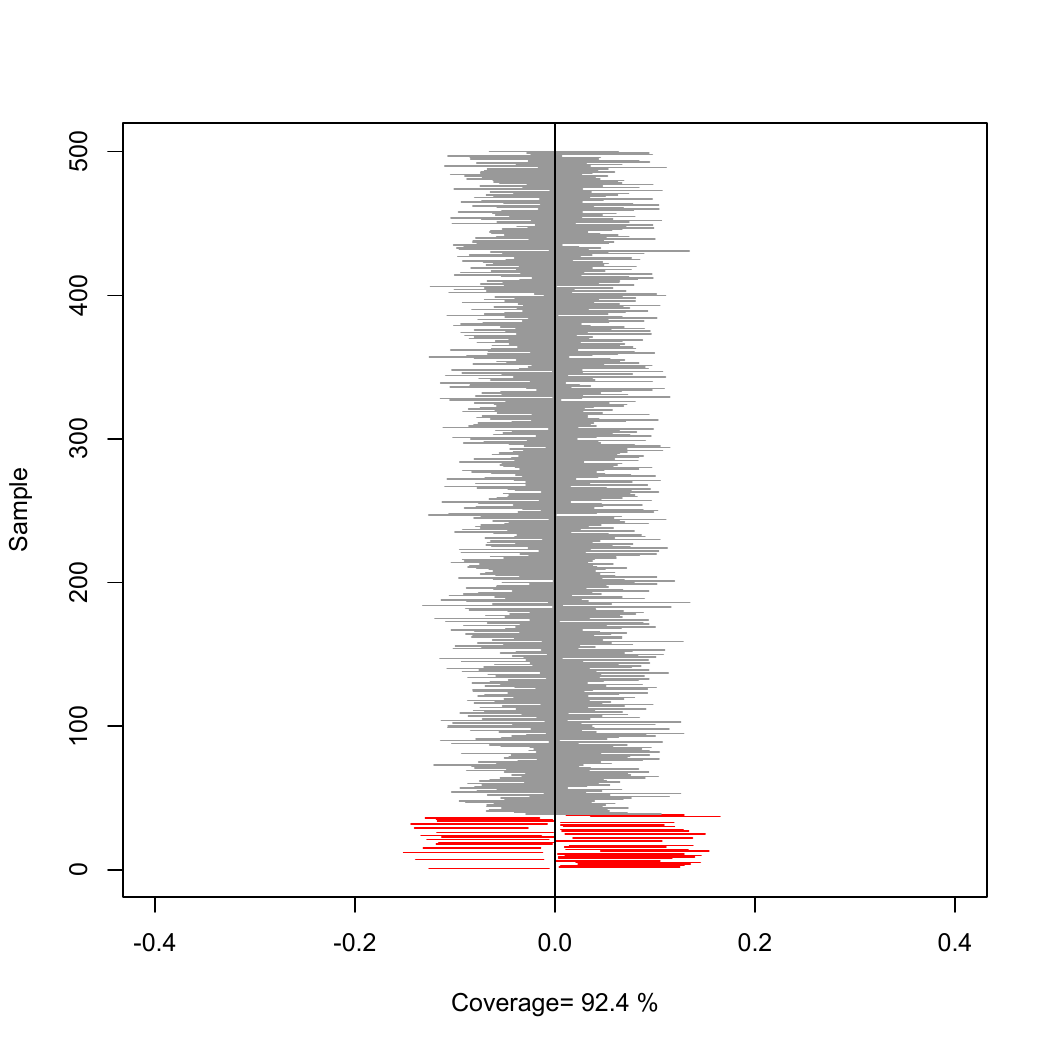}
 \caption{\tiny{Anticonservative inference}}
\end{subfigure}
\begin{subfigure}{.32\textwidth}
\includegraphics[width=.9\textwidth]{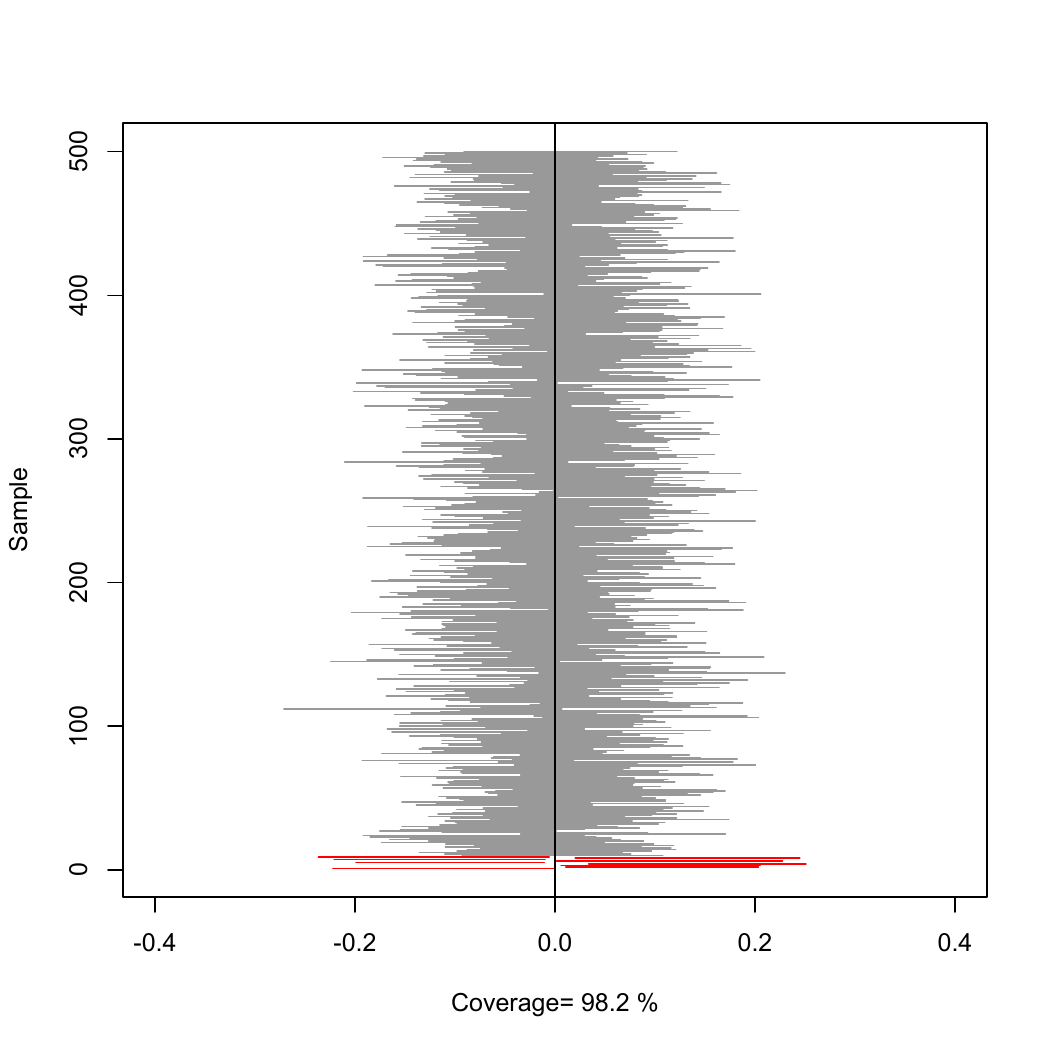}
  \caption{\tiny{Conservative inference}}
\end{subfigure}%
\begin{subfigure}{.32\textwidth}
\includegraphics[width=.9\textwidth]{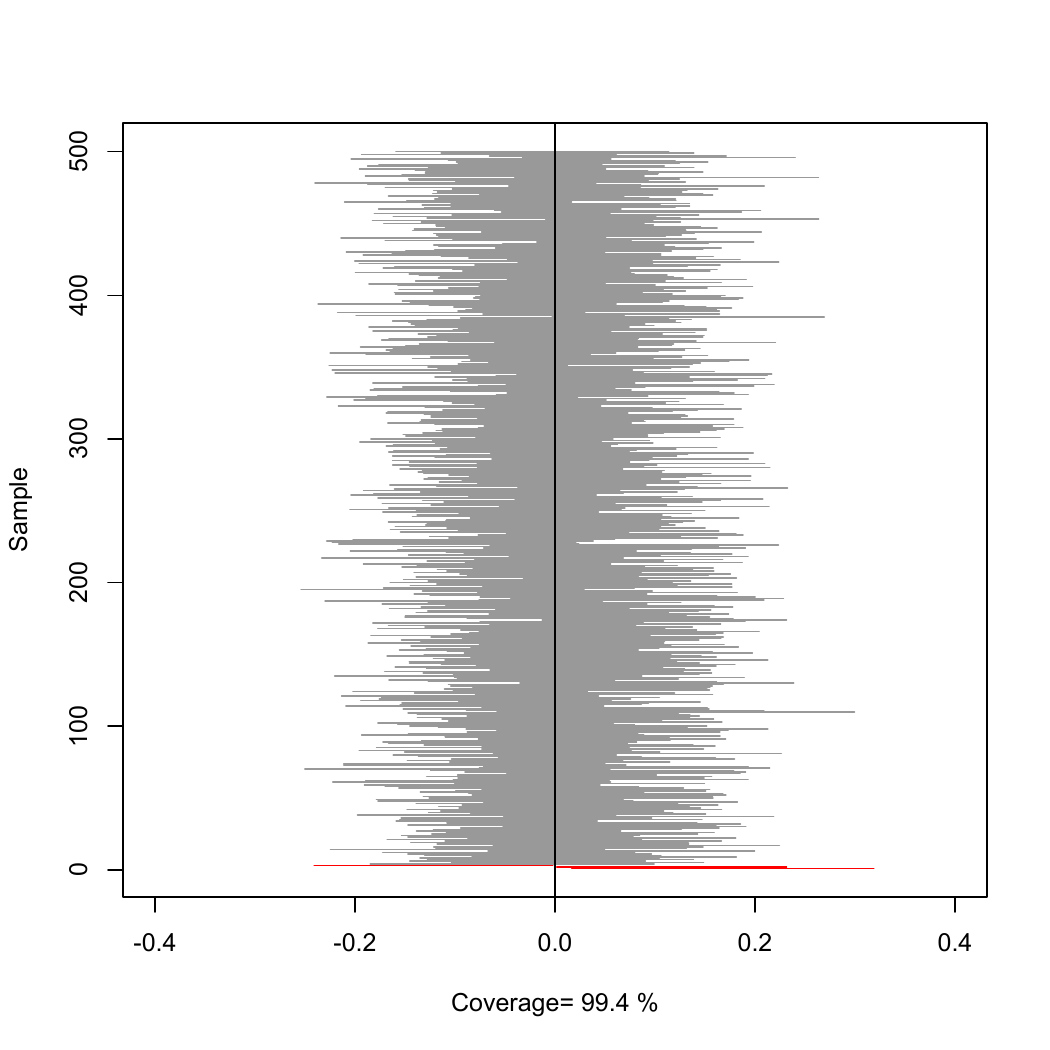}
  \caption{\tiny{Conservative inference}}
\end{subfigure}%
\begin{subfigure}{.32\textwidth}
\includegraphics[width=.9\textwidth]{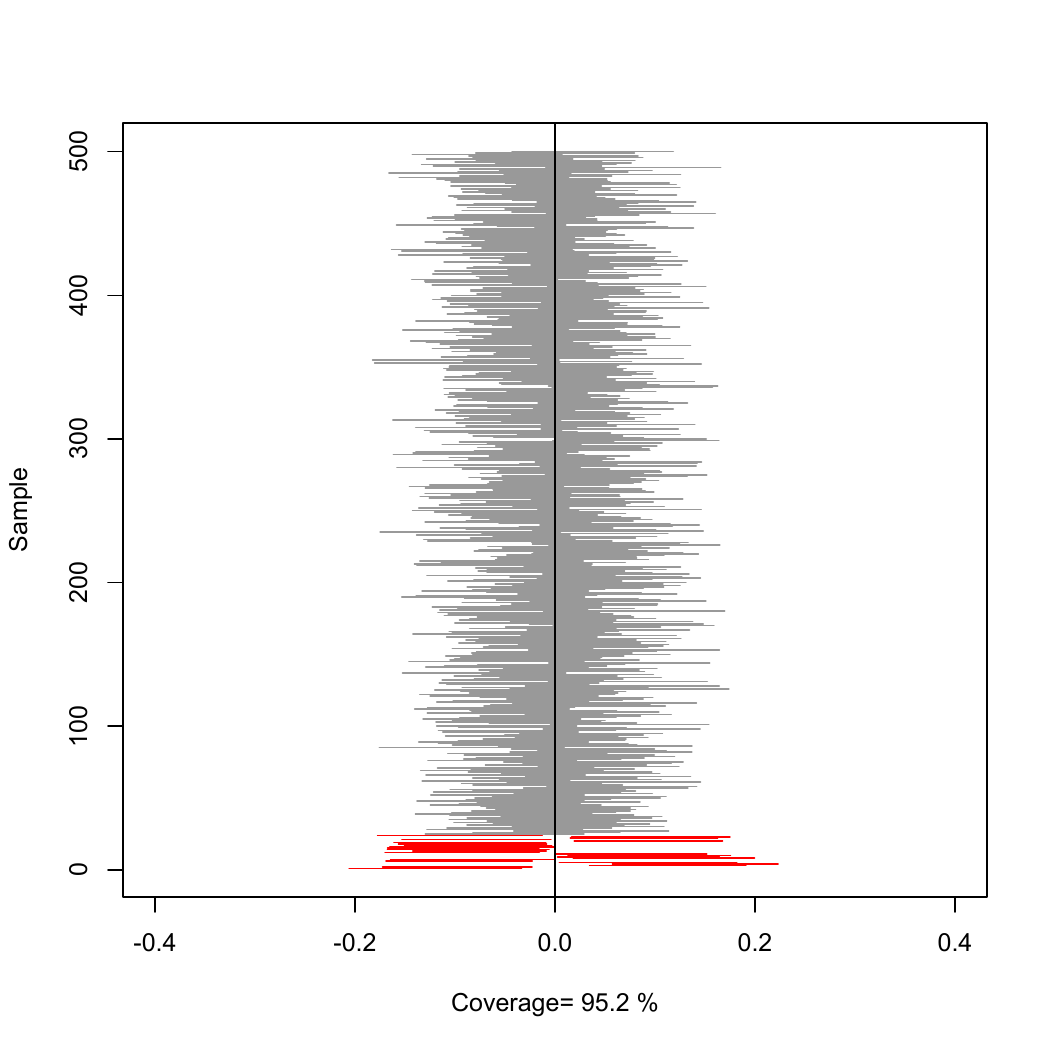}
  \caption{\tiny{Exact inference}}
\end{subfigure}
\caption{Each column corresponds to $95\%$ naive OLS confidence intervals for $\beta$ in the OLS regression of $Y$ onto $X$. In (a) and (b) the eigenvalues of $\Sigma_X$ and $\Sigma_Y$ are both increasing; in (c) and (d) the eigenvalues for $\Sigma_X$ are decreasing and $\Sigma_Y$  increasing. In (e) $\Sigma_X$ is the identity matrix. Each column represents $500$ simulated replications of sample size $n=200$.}
\label{fig:OLS}
\end{figure}

\section{Proofs}\label{sec:proofs}

\subsection{Proof of Theorem \ref{thm:lattice_inflation}}
For notational simplicity, we will assume throughout the proof that $\beta_1=\beta_2=\beta$. The proof for general $\beta_1,\beta_2$ goes through with minor modification.

Recall that $T_n$ is the sample covariance of $\mathbf{X}$ and $\mathbf{Y}$ and $\mathbf{Q}_n$ is the adjaceny matrix on $\Lambda_{n,d}$. To obtain the asymptotic distribution of $\sqrt{n} T_n$, we want to invoke \cite[Theorem 6]{martin1973mixing} which provides a sufficient condition for asymptotic normality. 

Following the notation of \cite[Theorem 6]{martin1973mixing}, we set $A=\{i\}$ with $i \in [n]$ and $X^{(n)}_A= \frac{1}{\sqrt{n}} (X_i- \overline{\bX}_n)$, and $\sigma_A= Y_i$. Then, we have $\sqrt{n} T_n= \sum_{i=1}^n X^{n}_{\{i\}}\sigma_{\{i\}}$. The conditions of the theorem are easily satisfied since, 
\begin{enumerate}
    \item $\sum_{i=1}^{n} (X^{n}_{\{i\}})^2 =\sum_{i=1}^{n} \frac{1}{n} (X_i- \overline{\bX}_n)^2 \le 1$, and
    \item $\max_{i \in n} |X^{n}_{\{i\}}| \le \frac{2}{\sqrt{n}} \rightarrow 0$.
\end{enumerate}
Therefore, taking $h=0$, we have, by \cite[Theorem 6]{martin1973mixing}, $\sqrt{n} T_n \xrightarrow{d} N(0,v)$ once we can show 
\begin{equation}\label{eq:vn_goes_to_v}
    \lim\limits_{n \rightarrow \infty} v_n:=\frac{1}{n}\bE \left(\sum_{i=1}^{n} (X_i-\overline{\bX}_n)Y_i\right)^2 \rightarrow v.
\end{equation}
To show \eqref{eq:vn_goes_to_v}, we have,
\begin{align}\label{eq:lattice_i}
    v_n &= \frac1n \sum_{i=1}^n \bE (X_i -\overline{\bX}_n)^2 +\frac{1}{n}\sum_{i \neq j} \bE (X_i- \overline{\bX}_n)(X_j- \overline{\bX}_n) \bE(Y_i Y_j) \nonumber \\
    &=1- \bE(\overline{\bX}_n^2) + \frac{1}{n}\sum_{i \neq j} \bE (X_i- \overline{\bX}_n)(X_j- \overline{\bX}_n) \bE(Y_i Y_j).
\end{align}
Since $\mathbf{X},\mathbf{Y} \sim \bP_{\beta,\mathbf{Q}}$, we have $\bE(Y_i Y_j)=\bE(X_i X_j)$. Hence, upon expanding, the second summand of \eqref{eq:lattice_i} becomes,
\begin{equation}\label{eq:lattice_term_ii}
    \frac{1}{n}\Big(\sum_{i 
    \neq j} \text{Cov}^2(X_i,X_j) + \bE(\overline{\bX}_n^2) \sum_{i 
    \neq j} \text{Cov}(X_i,X_j) - 2 \sum_{i 
    \neq j} \text{Cov}(X_i,X_j) \bE(X_i \overline{\bX}_n)\Big).
\end{equation}
Now we want to show the third summand of the above display is $o(1)$. To see this, note that,
\begin{align}
    &\frac{1}{n}\sum_{i 
    \neq j} \text{Cov}(X_i,X_j) \bE(X_i \overline{\bX}_n) = \frac{1}{n^2}\sum_{i 
    \neq j} \text{Cov}(X_i,X_j) \times \sum_{l=1}^n \bE(X_iX_l) \nonumber \\
    &=\frac{1}{n^2}\Big(\sum_{i 
    \neq j} \text{Cov}(X_i,X_j) \Big( \sum_{l\neq i,j} \bE (X_iX_l) +1+ \bE (X_i,X_j)\Big) \nonumber \\
    &= \frac{1}{n^2} \Big(\sum_{i \neq j \neq l} \text{Cov}(X_i,X_j) \text{Cov}(X_i,X_l) +  \sum_{i \neq j} \text{Cov}(X_i,X_j) +  \sum_{i \neq j} \text{Cov}^2(X_i,X_j)\Big) \label{eq:ising+ii}
\end{align}

To bound the second and third summands of \eqref{eq:ising+ii}, we begin by stating the following fact: there exists a constant $c = c(\beta,d)$ such that for all $j \in \mathbb Z^d$, 
\begin{equation}
0\le \mathrm{Cov}(X_i,X_j) \le e^{-c \|i-j\|} \label{eq:correlation_decay}
\end{equation}
 if $\beta < \beta_c$ and $\|\cdot\|$ is the $L^1$ norm in $\bZ^d$. The first inequality is known as the GKS inequality (cf \cite{friedli2017statistical}) and the second inequality is proved in \cite{aizenman1987phase,bhattacharya2021sharp,duminil2016new} for $\beta<\beta_c$. Using \eqref{eq:correlation_decay}, one can show that \cite[Theorem 5]{bhattacharya2021sharp} 
 \begin{equation}\label{eq:chi_finite}
     \sum_{i \neq j} \text{Cov}(X_i,X_j) = O(n).
 \end{equation}
 Now, using \eqref{eq:chi_finite}
 and $\text{Cov}(X_i,X_j) \ge 0$ by \eqref{eq:correlation_decay}, we further have $\sum_{i \neq j} \text{Cov}^2(X_i,X_j) = O(n)$, yielding the second and third summand of \eqref{eq:ising+ii} to be $o(1)$. Finally, the first summand of \eqref{eq:ising+ii} can be simplified as
 \begin{align*}
     \frac{1}{n^2}\sum_{i \neq j \neq l} \text{Cov}(X_i,X_j) \text{Cov}(X_i,X_l) &= O\left(\frac{1}{n^2} \sum_{i \neq j} \text{Cov}(X_i,X_j)\right) \\
     &= O\left(\frac{1}{n}\right) = o(1).
 \end{align*}
 This yields the third summand of \eqref{eq:lattice_term_ii} is $o(1)$. Now, we focus on the second summand of \eqref{eq:lattice_term_ii}. Since $\overline{\mathbf{X}} \xrightarrow{\bP} 0$ as $\beta <\beta_c$, by the dominated convergence theorem, we have $\bE(\overline{\bX}_n^2) \rightarrow 0$. Again, invoking \eqref{eq:chi_finite}, we have $\sum_{i 
    \neq j} \text{Cov}(X_i,X_j)= O(n)$, yielding the second summand to be $o(1)$. Plugging this into \eqref{eq:lattice_i}, we have

    \begin{equation}\label{eq:vn_value}
        v_n= 1+\frac{1}{n}\sum_{i 
    \neq j} \text{Cov}^2(X_i,X_j) +o(1)
    \end{equation}
Now we claim the following: $\lim\limits_{n\rightarrow \infty}\frac{1}{n}\sum_{i 
    \neq j} \text{Cov}^2(X_i,X_j)$ exists and is strictly positive. This will show asymptotic normality and the fact that the asymptotic variance is strictly larger than $1$, concluding our proof.

    To see the first part of the claim, note that $\lim\limits_{n\rightarrow \infty}\frac{1}{n}\sum_{i 
    \neq j} \text{Cov}(X_i,X_j)$ exists using \cite[Theorem 5]{bhattacharya2021sharp}. Since $0\le \mathrm{Cov}^2(X_0,X_j) \le e^{-2c \|j\|}$, we have the same proof for existence of limit. 
    
To prove the second part of the claim, we plan to show, there exists a constant $C:= C(\beta,d)>0$ such that
\begin{equation}\label{eq:cov_lbd}
    \inf_{i \sim j} \text{Cov}(X_i,X_j) \ge C,
\end{equation}
where $i \sim j$ means $i$ and $j$-th vertices are neighbors in $\Lambda_{n,d}$. This, in turn will show that $\frac{1}{n}\sum_{i 
    \neq j} \text{Cov}^2(X_i,X_j) \ge \frac{C}{n} \sum_{i \ne j} \mathbf{1}_{i \sim j} =\Theta(1)$,
    since the number of neighbors on $\Lambda_{n,d}$ is $\Theta (1)$,
yielding the asymptotic distribution of $\sqrt{n}T_n$. \par

We now turn our attention to proving \eqref{eq:cov_lbd}. To this end, we invoke  the \textit{Edwards-Sokal coupling} between the measures $\bP_{\beta,\mathbf{Q}}$ and random-cluster model. The random cluster model, on a finite graph $G = (V,E)$ is a probability measure on $\{0,1\}^{E}$ defined by:
\begin{equation}
\phi_{G,p,q}(\omega) = p^{|\{e:\omega_e =1\}|} (1-p)^{|\{e:\omega_e =0\}|}q^{k(\omega)} \qquad \omega \in \{0,1\}^{E}.
\end{equation}
where $k(\omega)$ denotes the number of connected components of $\omega$ (where we think of $\omega$ as the graph $(V, \{e:\omega_e=1\})$). The Edwards-Sokal coupling (cf. \cite[Theorem 4.91]{grimmett2006random}) states that 
\begin{equation}\label{eq:edwards_sokal}
    \text{Cov}(X_i, X_j)= \phi_{G,p,2}(i \leftrightarrow j),
\end{equation}
where the notation $i \leftrightarrow j$ implies that the vertices $i$, $j$ are connected in the random-cluster model via open edges and $p:= 1- e^{2 \beta}$. Now, we invoke \cite[(3.23)]{grimmett2006random} with $p_1=p$, $p_2= p/2$, $q_1=2$ and $q_2=1$. Hence \begin{equation}
    \phi_{G,p,2}(i \leftrightarrow j) \ge \phi_{G,p/2,1}(i \leftrightarrow j) =p/2,
\end{equation}
if $i \sim j$ in $G$ since the random-cluster model $\phi_{G,p,q}$ with $q=1$ corresponds to Bernoulli percolation with each edge being open independently with probability $p$. Hence, if $i \sim j$, we have, by \eqref{eq:edwards_sokal}, $\text{Cov}(X_i, X_j) \ge \frac{1}{2} (1- e^{2 \beta})$, proving \eqref{eq:cov_lbd}. \par
Moreover, since $\overline{\mathbf{X}}_n \xrightarrow{\bP} 0$ as $\beta <\beta_c$, we have $\frac{1}{n} \sum_{i=1}^{n}(X_i- \overline\bX_n)^2= 1- \overline\bX^2_n \xrightarrow{\bP} 1$. Therefore, by Slutsky's theorem, we have $\sqrt{n} \rho_n \xrightarrow{d} N(0,v^2)$.\\
Finally, from \eqref{eq:vn_value}, we know for $\beta(p_1)< \beta(p_2)<\beta_c$, where $\beta{(p_j)}$ solves $p_j=\frac{1}{2}(1-e^{2\beta(p_j)})$, $j=1,2$,
\begin{align*}
    \lim\limits_{n \rightarrow \infty}v_n(\beta(p_1))&= 1+ \lim\limits_{n \rightarrow \infty}\frac{1}{n}\sum_{i 
    \neq j} \text{Cov}^2_{\beta(p_1),\bQ}(X_i,X_j) \\
    &\le 1+ \lim\limits_{n \rightarrow \infty}\frac{1}{n}\sum_{i 
    \neq j} \text{Cov}^2_{\beta(p_2),\bQ}(X_i,X_j) = \lim\limits_{n \rightarrow \infty}v_n(\beta(p_2)),
\end{align*}
where the inequality is due to \cite[Lemma 15]{bhattacharya2021sharp}. This shows that the asymptotic variance is a nondecreasing function of $\beta$, completing the proof.

\subsection{Proof of Theorem \ref{thm:cw_tn}}
\noindent    We want to show, for any $t \in \mathbb{R}$,
\begin{align}\label{eq:centered_clt}
    \lim\limits_{n \rightarrow \infty}\bE\Big(\exp(t\sqrt{n} T_n)\Big) = \exp \left(\frac{t^2 (1-m^2_1)(1-m^2_2) }{2}\right),
\end{align}
where, recall that, $T_n= \frac{1}{n} \sum_{i=1}^n(X_i -\overline{\bX}_n) Y_i$ and $m_1,m_2$ are defined in the statement of Theorem \ref{thm:cw_tn}. By Lemma \ref{lem:aux_var}, there exists $Z_2$ such that $Z_2 \sim N\Big(\overline{\mathbf{Y}}_n,\frac{1}{n \beta_2}\Big)$ and $Y_1, \ldots, Y_n$ are independent given $Z_2$, i.e., $\bP(\mathbf{Y}= \mathbf{y}|Z_2) \propto \prod_{i=1}^n e^{\beta_2 Z_2 y_i}$ where $\mathbf{y}= (y_1, \ldots y_n)$. Therefore, for any $i= 1,2, \ldots,n$, we have $\bE(Y_i|Z_2)= \tanh(\beta_2 Z_2)$ and $\text{Var}(Y_2|Z_2)= \sech^2(\beta_2 Z_2)$. \par 
Moreover, by Lemma \ref{lem:aux_var}, $Z_2 \xrightarrow{d} 0$ if $\beta_2 \le 1$ and $Z_2\xrightarrow{d} \frac{1}{2}(\delta_{m_2}+\delta_{-{m_2}})$ otherwise. Invoking continuous mapping theorem, we have
\begin{equation}\label{eq:secz_limit}
    \sech^2(\beta_2 Z_2) \stackrel{\bP}{\rightarrow}
    1- m^2_2,
\end{equation} 
where we used $\sech^2(\beta_2m_2)= 1- m^2_2$. Also, since $\sech^2(x) \le 1$, by the dominated convergence theorem, $\bE\Big(\sech^2(\beta_2 Z_2)\Big) \rightarrow 1- m^2_2$.

Using Lemma \ref{lem:cw_clt}, we obtain $\overline{\bX}_n \xrightarrow{d} 0$ if $\beta_1 \le 1$ and $\overline{\bX}_n\xrightarrow{d} \frac{1}{2}(\delta_{m_1}+\delta_{-{m_1}})$ otherwise (using symmetry of the model around $0$). Therefore, by continuous mapping theorem, 
\begin{equation}\label{eq:x_var_limit}
    \frac{1}{n}\sum_{i=1}^{n}(X_i-\overline{\bX}_n)^2 = 1- \overline{\bX}^2_n\stackrel{\bP}{\rightarrow} 1-m^2_1
\end{equation} 
Since $\overline{\bX}^2_n \le 1$, by the dominated convergence theorem, $\bE\Big(\frac{1}{n}\sum_{i=1}^{n}(X_i-\overline{\bX}_n)^2\Big) \rightarrow 1-m^2_1$. \par
Now, conditioned on $\bX,Z_2$,
\begin{align*}
    &\bE\Big(\exp(tT_n)|\bX,Z_2\Big)\\
    &=\exp\Big(\underbrace{\sum_{i=1}^{n} \Big(\log\cosh(\frac{t(X_i-\overline{\bX}_n)}{n^{1/2}}+\beta_2 Z_2)- \log\cosh(\beta_2Z_2)}_{=:W_n}\Big)\Big).
\end{align*}
By a Taylor series expansion,
\begin{align*}
    &W_n-\frac{t^2}{2}(1-m^2_1)(1-m^2_2)\\ &= \frac{t^2}{2} \left(\frac1n \sum_{i=1}^{n}(X_i-\overline{\bX}_n)^2 \sech^2(\beta_2 Z_2)- (1-m^2_1)(1-m^2_2)\right) + O\left(\frac{1}{\sqrt{n}}\right).
\end{align*}
Since $\sum_{i=1}^{n}(X_i-\overline{\bX}_n)^2 \xrightarrow{\bP} 1-m^2_1$ and $\sech^2(\beta_2 Z_2) \xrightarrow{d} 1-m^2_2$ by \eqref{eq:x_var_limit} and \eqref{eq:secz_limit}, we obtain
\begin{align}
    |W_n - \frac{t^2}{2}(1-m^2_1)(1-m^2_2)|=o_{\bP}(1).
\end{align}
To show \eqref{eq:centered_clt}, using uniform integrability, it is enough to show $$\limsup\limits_{n \rightarrow \infty}\bE(e^{2W_n- t^2 (1-m^2_1)(1-m^2_2)})<\infty,$$
which is immediate from the Taylor expansion above, since 
\begin{align*}
    \left|\frac1n \sum_{i=1}^{n}(X_i-\overline{\bX}_n)^2 \sech^2(\beta_2 Z_2)- (1-m^2_1)(1-m^2_2)\right| \le 4.
\end{align*}
This completes the proof of the Theorem.

\textbf{Proof of Corollary \ref{cor:cw_correlation}}
    When $\beta_1=\beta_2=0$, $X_i$, $Y_i$s are independent and the proof is immediate. For $\beta_i>0$ case, the conclusion follows combining Theorem \ref{thm:cw_tn} and \eqref{eq:x_var_limit} and using Slutsky's theorem.

\subsection{Proof of Theorem \ref{prop:er_var}}
Recall that $T_n$ is the sample covariance of $\mathbf{X}$ and $\mathbf{Y}$. We need the following auxilliary lemma.

\begin{lemma}\label{lem:iid_convergence}
    Suppose $\mathbf{Z}= (Z_1, \ldots, Z_n)$ are i.i.d. random variables with mean $0$ and variance $\tau^2_n$ with converging to $\tau^2 \in (0,\infty)$. Assume the matrix $\bQ_n$ satisfies Assumptions \ref{ass:reg}-\ref{ass:graphon_limit}. Set $B_n= \bQ_n - \frac{1}{n} \mathbf{1}\mathbf{1}^\top$. Suppose $\bX\sim \bP_{\beta_1,\bQ_n}$ and $\bX$ and $\mathbf{Z}$ are independent.
    
    
    Define $\sqrt{n} T^Z_n= \frac{1}{\sqrt{n}} \sum_{i=1}^n (X_i -\overline{\mathbf{X}}_n)Z_i$. Then,
    \begin{equation}
        (\sqrt{n} T^Z_n, \mathbf{Z}^\top B_n \mathbf{Z}) \xrightarrow{d} ((1-m^2_1)\tau_0 W_0, S_0),
    \end{equation}
    where $W_0 \sim N(0,1)$, $S_0$ is some random variable, and $W_0$ and $S_0$ independent, and $m_1$ is the unique positive solution of the equation $m=\tanh(\beta_1 m)$ if $\beta_1>1$ and $0$ otherwise.
\end{lemma}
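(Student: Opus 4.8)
The plan is to exploit the independence of $\bX$ and $\mathbf{Z}$ by conditioning on $\bX$. Writing $a = \frac{1}{\sqrt{n}}(\bX - \overline{\bX}_n\mathbf{1})$, we have $\sqrt{n}T^Z_n = a^\top \mathbf{Z}$, a \emph{linear} form in the i.i.d.\ vector $\mathbf{Z}$ whose coefficient vector $a$ depends only on $\bX$, while $Q_n := \mathbf{Z}^\top B_n \mathbf{Z}$ is a \emph{quadratic} form depending only on $\mathbf{Z}$. First I would record the two deterministic facts about $a$ that drive everything: since $\sum_i (X_i - \overline{\bX}_n) = 0$ we have $a \perp \mathbf{1}$, and since $X_i^2 = 1$ we have $\|a\|^2 = 1 - \overline{\bX}_n^2$ together with $\|a\|_\infty \le 2/\sqrt{n}$. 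The mean-field phase-transition/universality input (the same one, valid under Assumptions \ref{ass:reg}--\ref{ass:graphon_limit}, giving $\overline{\bX}_n^2 \xrightarrow{\bP} m_1^2$) then yields $\|a\|^2 \xrightarrow{\bP} 1-m_1^2$. The goal is to show that the joint characteristic function factorizes, i.e.\ $\bE[e^{\I s\sqrt{n}T^Z_n + \I t Q_n}] \to e^{-\frac{1}{2}s^2(1-m_1^2)\tau^2}\,\phi_{S_0}(t)$, which simultaneously encodes marginal Gaussianity of the linear part, convergence of $Q_n$ to $S_0$, and their asymptotic independence.

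For the linear part, conditionally on $\bX$ the summands $a_i Z_i$ are independent, mean zero, with total variance $\tau_n^2\|a\|^2 \to \tau^2(1-m_1^2)$; since $\|a\|_\infty \to 0$ the Lindeberg condition holds, so conditionally $a^\top\mathbf{Z} \xrightarrow{d} N(0,\tau^2(1-m_1^2))$, matching the scale in the claimed limit. For the quadratic part I would use the spectral decomposition $B_n = \sum_k \mu_{n,k} v_{n,k}v_{n,k}^\top$. Because $B_n\mathbf{1}=0$ and $n\bQ_n$ converges in cut-distance to $W$ whose operator $T_W$ is Hilbert--Schmidt with eigenvalues $\lambda_1=1 > \lambda_2 \ge \cdots$ that are square-summable (Assumption \ref{ass:graphon_limit}), only finitely many eigenvalues $\mu_{n,2},\dots,\mu_{n,K}$ stay bounded away from $0$; the corresponding eigenvectors are delocalized ($\|v_{n,k}\|_\infty \to 0$), so each $v_{n,k}^\top\mathbf{Z}$ satisfies a CLT while the bulk of $B_n$ contributes an asymptotically deterministic trace term ($\bE[Q_n]=\tau_n^2\,\mathrm{tr}(B_n)=-\tau_n^2$) and negligible fluctuation. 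This gives $Q_n \xrightarrow{d} S_0$ for a weighted-$\chi^2$-plus-constant limit $S_0$; only existence of this limit is needed.

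The crux is the asymptotic independence, and this is where I expect the real work to be. Passing to a Gaussian surrogate $\mathbf{G}\sim N(0,\tau_n^2 I_n)$ (justified, along with the transfer back to $\mathbf{Z}$, by a Lindeberg/fourth-moment argument that uses $\|a\|_\infty\to 0$ and $\|v_{n,k}\|_\infty \to 0$), the pair $(a^\top\mathbf{G},\,\mathbf{G}^\top B_n\mathbf{G})$ becomes tractable: the quadratic form depends in the limit only on the significant projections $v_{n,k}^\top\mathbf{G}$, $k\le K$, and $a^\top\mathbf{G}$ is jointly Gaussian with them with cross-covariances $\tau_n^2\, a^\top v_{n,k}$. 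Thus everything reduces to the asymptotic orthogonality $a^\top v_{n,k}\xrightarrow{\bP}0$ for each fixed significant eigenvector. Since $v_{n,k}\perp\mathbf{1}$, one has $a^\top v_{n,k} = \tfrac{1}{\sqrt{n}}\bX^\top v_{n,k}$, a projected linear statistic of the Ising field with $\bE[(a^\top v_{n,k})^2] = \tfrac{1}{n} v_{n,k}^\top \mathrm{Cov}(\bX) v_{n,k}$; decay of $\mathrm{Cov}(\bX)$ along the non-constant eigendirections makes this $O(1/n)\to 0$. The analogous first-order check is the scalar bound $a^\top B_n a = \tfrac{1}{n}\bX^\top B_n\bX \xrightarrow{\bP} 0$ (for Curie--Weiss, where $B_n=-\tfrac1n I_n$, this equals $-1/n$ exactly). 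The main obstacle is making these projection estimates uniform and handling the regime $\beta_1>1$, where $\overline{\bX}_n$ does not concentrate at a point but at $\pm m_1$; there I would first decouple through the global magnetization (an auxiliary-variable/conditioning step as in the proof of Theorem \ref{thm:cw_tn}), reducing to a concentrated, effectively high-temperature computation before applying the orthogonality estimates above.
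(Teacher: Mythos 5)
Your overall architecture coincides with the paper's: pass to a Gaussian surrogate $\mathbf{G}$, diagonalize in the eigenbasis of $\bQ_n$ (where regularity forces $\lambda_{1,n}=1$ with eigenvector $\mathbf{1}/\sqrt n$, killed by $a\perp\mathbf{1}$), reduce asymptotic independence to smallness of the Ising field's projections along $B_n$, and transfer back to general i.i.d.\ $\mathbf{Z}$ by a swapping argument (the paper does the Gaussian case by an explicit joint MGF computation and the transfer by the moment comparison of \cite[Lemma 4.1]{xu2022ising}, using the entrywise bound of Assumption \ref{ass:entry_bound}). But there is one concretely wrong step in your sketch: the claim that the bulk of $B_n$ contributes ``an asymptotically deterministic trace term and negligible fluctuation,'' so that the quadratic form depends in the limit only on finitely many significant projections. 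Cut-distance convergence pins down each fixed eigenvalue of $\bQ_n$ but not its Frobenius norm; Assumption \ref{ass:frob_conv} only gives $\|\bQ_n\|_F^2\to\gamma$, and $\gamma$ can strictly exceed $\sum_i\lambda_i^2$. The paper's own example of random $d_n$-regular graphs with $d_n/n\to d\in(0,1)$ has limit graphon $W\equiv 1$ (single nonzero eigenvalue $1$) yet $\|\bQ_n\|_F^2=n/d_n\to 1/d>1$, so the bulk of $\mathbf{G}^\top B_n\mathbf{G}$ carries a nondegenerate Gaussian fluctuation of variance $\approx 2\tau^4(1/d-1)$, which is genuinely part of $S_0$. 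Your finite-rank reduction of both the limit of $Q_n$ and the independence question is therefore incomplete as stated. It is repairable: splitting $B_n=B_n^{(K)}+R_n$ with $\|R_n\|_{\mathrm{op}}\le\epsilon$, the mixed terms coupling $a^\top\mathbf{G}$ to $\mathbf{G}^\top R_n\mathbf{G}$ are controlled by $|a^\top R_n a|\le\epsilon\|a\|^2$ and $a^\top R_n^2 a\le\epsilon^2\|a\|^2$ (this is exactly what the paper's Taylor expansion of the log-MGF does), so the bulk is automatically asymptotically independent of the linear form with no Ising input — but without this the $S_0$ you construct is the wrong random variable.

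The step you correctly flag as the crux — $a^\top v_{n,k}\xrightarrow{\bP}0$ for the significant eigendirections, equivalently $\frac1n\bX^\top B_n\bX\xrightarrow{\bP}0$ and $\frac1n\bX^\top B_n^2\bX\xrightarrow{\bP}0$ — is asserted rather than proved: ``decay of $\mathrm{Cov}(\bX)$ along the non-constant eigendirections'' is precisely the nontrivial content, which the paper imports as tightness of $\bX^\top B_n\bX$ from \cite[Lemma 3.1]{xu2022ising}, handling $\beta_1>1$ by conditioning on the sign of $\overline{\bX}_n$ exactly as your auxiliary-variable decoupling proposes, so your outline is aligned there even if no proof is supplied. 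Two smaller points: your delocalization claim $\|v_{n,k}\|_\infty\to 0$ is left unjustified but does follow from Assumption \ref{ass:entry_bound}, since $\|v_{n,k}\|_\infty\le \|B_nv_{n,k}\|_\infty/|\mu_{n,k}|\le C(\kappa)/(|\mu_{n,k}|\sqrt n)$ by Cauchy--Schwarz over rows; and both your Lindeberg transfer and the paper's moment comparison require moments of $Z_i$ beyond the stated variance hypothesis (harmless in the intended application, where $Z_i$ is bounded). Your limiting variance $\tau^2(1-m_1^2)$ for the linear part matches the paper's proof (the factor $(1-m_1^2)\tau_0$ in the lemma statement is a typo for $\sqrt{1-m_1^2}\,\tau$).
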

We defer the proof of the above Lemma to Section \ref{sec:appendix}. We now invoke the Lemma \ref{lem:iid_convergence} to prove the following convergence result for Curie-Weiss model.
\begin{lemma}\label{lem:iid_to_cw}
    Suppose $\bX\sim \bP_{\beta_1, \bQ_n}$, $\mathbf{Y} \sim \bP_{\beta_2, \bQ^{\text{CW}}}$, $\bX,\mathbf{Y}$ are independent and $\bQ_n$ satisfies Assumptions \ref{ass:reg}-\ref{ass:graphon_limit}. Then 
    \begin{equation*}
        (\sqrt{n}T_n, \mathbf{Y}^\top B_n \mathbf{Y}) \xrightarrow{d} (W_1,S_0),
    \end{equation*}
    where $W_1\sim N(0,(1-m^2_1)(1-m^2_2))$, $S_0$ defined in Lemma \ref{lem:iid_convergence} and $W_1$ and $S_0$ are independent.
\end{lemma}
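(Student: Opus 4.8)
The plan is to reduce both coordinates to the conditionally i.i.d.\ structure of the Curie--Weiss vector $\mathbf{Y}$ and then apply Lemma~\ref{lem:iid_convergence} as a black box. First I would invoke the auxiliary variable $Z_2 \sim N(\overline{\mathbf{Y}}_n, \tfrac{1}{n\beta_2})$ of Lemma~\ref{lem:aux_var}, under which $Y_1,\dots,Y_n$ are i.i.d.\ given $Z_2$ with $\bE(Y_i \mid Z_2) = \tanh(\beta_2 Z_2)$ and $\mathrm{Var}(Y_i\mid Z_2) = \sech^2(\beta_2 Z_2)$. Setting $\widetilde{\mathbf{Y}} := \mathbf{Y} - \tanh(\beta_2 Z_2)\mathbf{1}$, the centered coordinates $\widetilde{Y}_i$ are, conditionally on $Z_2$, i.i.d.\ with mean $0$ and variance $\sech^2(\beta_2 Z_2)$. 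The key algebraic observation is that \emph{both} statistics depend on $\mathbf{Y}$ only through $\widetilde{\mathbf{Y}}$: since $\sum_i (X_i - \overline{\bX}_n) = 0$ we have $\sqrt{n}T_n = \tfrac{1}{\sqrt n}\sum_i (X_i-\overline{\bX}_n)\widetilde{Y}_i$, and since Assumption~\ref{ass:reg} gives $\bQ_n\mathbf{1} = \mathbf{1}$ and hence $B_n\mathbf{1} = 0$, the quadratic form is invariant under centering: $\mathbf{Y}^\top B_n \mathbf{Y} = \widetilde{\mathbf{Y}}^\top B_n \widetilde{\mathbf{Y}}$.

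Next I would condition on $Z_2$. Conditionally, $\widetilde{\mathbf{Y}}$ is i.i.d.\ with mean $0$ and variance $\tau_n^2 = \sech^2(\beta_2 Z_2)$, independent of $\bX \sim \bP_{\beta_1,\bQ_n}$, so the pair $(\bX, \widetilde{\mathbf{Y}})$ falls exactly under the hypotheses of Lemma~\ref{lem:iid_convergence} with $\tau^2 = \sech^2(\beta_2 Z_2)$. Applying that lemma conditionally yields
\begin{equation*}
\bigl(\sqrt{n}T_n,\ \mathbf{Y}^\top B_n\mathbf{Y}\bigr) = \bigl(\sqrt{n}T_n^{\widetilde{Y}},\ \widetilde{\mathbf{Y}}^\top B_n\widetilde{\mathbf{Y}}\bigr) \xrightarrow{d} \bigl(G,\ S_0\bigr),
\end{equation*}
where $G$ is a centered Gaussian of variance $(1-m_1^2)\,\sech^2(\beta_2 Z_2)$, independent of $S_0$. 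Then I would use that the proof of Theorem~\ref{thm:cw_tn} has already established $\sech^2(\beta_2 Z_2)\xrightarrow{\bP} 1-m_2^2$ (see \eqref{eq:secz_limit}): both the conditional Gaussian variance and the law of $S_0$ depend on $Z_2$ only through $\sech^2(\beta_2 Z_2)$, so each concentrates at its value at $\tau^2 = 1-m_2^2$. The $Z_2$-mixture therefore collapses to the single law $W_1\sim N(0,(1-m_1^2)(1-m_2^2))$ together with the $S_0$ of Lemma~\ref{lem:iid_convergence}, with $W_1$ independent of $S_0$, which is the claim.

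I would execute the last two steps at the level of the joint characteristic function $\bE\bigl[e^{\mathrm{i} s\sqrt{n}T_n + \mathrm{i} t\,\mathbf{Y}^\top B_n\mathbf{Y}}\bigr]$, using the tower property to factor out the conditioning on $Z_2$. The main obstacle is that this is not a verbatim application of Lemma~\ref{lem:iid_convergence}: both the conditional law of $\widetilde{\mathbf{Y}}$ given $Z_2$ and the law of $Z_2$ itself depend on $n$, and the relevant variance $\tau_n^2 = \sech^2(\beta_2 Z_2)$ is \emph{random} rather than a fixed sequence converging to a constant. To make the conditional-to-unconditional passage rigorous I would show that the conditional characteristic function converges to $\exp\!\bigl(-\tfrac{1}{2}s^2(1-m_1^2)\sech^2(\beta_2 Z_2)\bigr)\,\phi_{S_0}(t)$ uniformly for $Z_2$ in shrinking neighborhoods of its limit points $\{0\}$ (if $\beta_2\le 1$) or $\{\pm m_2\}$ (if $\beta_2 > 1$), exploiting continuity of the limit in $\tau^2$ and the boundedness of characteristic functions; the concentration of $Z_2$ on those neighborhoods then lets bounded convergence replace $\sech^2(\beta_2 Z_2)$ by its probability limit $1-m_2^2$. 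Establishing this stability of Lemma~\ref{lem:iid_convergence} under the small random perturbations of $\tau^2$ induced by $Z_2$ is the crux; everything else is bookkeeping.
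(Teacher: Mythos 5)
Your proposal follows essentially the same route as the paper's proof: condition on the auxiliary variable $Z_2$ from Lemma \ref{lem:aux_var}, center $\mathbf{Y}$ by $\tanh(\beta_2 Z_2)\mathbf{1}$ (both statistics being invariant since $\sum_i (X_i-\overline{\bX}_n)=0$ and $B_n\mathbf{1}=0$), and apply Lemma \ref{lem:iid_convergence} conditionally with $\tau_n^2=\sech^2(\beta_2 Z_2)\xrightarrow{\bP}1-m_2^2$. If anything, the stability-under-random-$\tau_n^2$ issue you flag as the crux is a genuine subtlety that the paper's one-line justification passes over silently, so your characteristic-function treatment with uniformity over shrinking neighborhoods of the limit points of $Z_2$ is a more careful execution of the identical strategy rather than a different one.
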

The proof of the Lemma is deferred to Section \ref{sec:appendix}. \par
Now, we are in a position to prove the Theorem \ref{prop:er_var}. To this end note the following observations:
\begin{enumerate}
\item The measures $P_{\beta_1,\bQ_n} \times P_{\beta_2,\bQ_n}$ and $P_{\beta_1,\bQ_n} \times P_{\beta_2,\bQ^{\text{CW}}}$ are mutually contiguous by \cite[Lemma 4.4 part (b)]{xu2022ising}.
    \item The log-likelihood ratio $$\log \frac{d(P_{\beta_1,\bQ_n} \times P_{\beta_2,\bQ_n})}{d(P_{\beta_1,\bQ_n} \times P_{\beta_2,\bQ^{\text{CW}}})}(\bX, \mathbf{Y})= \frac{\beta_2}{2} \mathbf{Y}^\top B_n \mathbf{Y} - Z_n (\beta_2, \bQ_n)+ Z_n(\beta_2,\bQ^{\text{CW}})$$ is a  function of $\mathbf{Y}^\top B_n \mathbf{Y}$.
    \item When $(\bX, \mathbf{Y}) \sim \bP_{\beta_1, \bQ^{\text{CW}}} \times \bP_{\beta_2, \bQ^{\text{CW}}}$, $\sqrt{n}T_n$ has a non-degenerate asymptotic distribution independent of $\mathbf{Y}^\top B_n \mathbf{Y}$.
\end{enumerate}
Therefore, by Le Cam's third Lemma \citep{van2000asymptotic}, the asymptotic distribution of $\sqrt{n}T_n$ is same under the measures $P_{\beta_1,\bQ_n} \times P_{\beta_2,\bQ_n}$ and $P_{\beta_1,\bQ_n} \times P_{\beta_2,\bQ^{\text{CW}}}$. This yields the asymptotic distribution by using Lemma \ref{lem:iid_to_cw}. \par
For the sample correlation, recall that $\frac{1}{n}\sum_{i=1}^{n}(X_i -\overline\bX^2_n)=1-\overline\bX^2_n \xrightarrow{\bP} 1-m^2_1$ and $\frac{1}{n}\sum_{i=1}^{n}(Y_i- \overline{\mathbf{Y}}^2_n) \xrightarrow{\bP} 1- m^2_2$. This, along with Slutsky's Theorem, completes the proof.

\subsection{Proof of Theorem \ref{lem:ggm}} 
\noindent (i) Here, we assume  $\bX,\mathbf{Y} \sim N(0, \Sigma)$ are independent and $T_n$ is their sample covariance. So, $nT_n= \bX^\top J \mathbf{Y}$ where $J= I- \frac{1}{n} \mathbf{1}\mathbf{1}^\top$. Conditioned on $\bX$, we know $nT_n \sim N(0, \bX^\top J \Sigma J \bX)$. Recall that $\widetilde{\Sigma}:= \Sigma^{1/2} J \Sigma^{1/2}$ and let $\widetilde{\Sigma}= P \tilde\Lambda P^\top$ be the singular value decomposition of $\widetilde{\Sigma}$. Therefore
\begin{equation}\label{eq:sigma_var}
    \bX^\top J \Sigma J \bX= \bZ^\top \widetilde\Sigma^2 \bZ= \bZ^\top P \tilde\Lambda^2 P^\top \bZ \stackrel{D}{=} \bZ^\top \tilde\Lambda^2 \bZ,
\end{equation}
where $\bZ \sim N(0,I_n)$. Now, we invoke Lemma \ref{lem:bilinear_concentration} since, $\tilde\lambda^2_1 \ll \sum_{i=1}^{n} \tilde\lambda^2_i$, to obtain $\frac{\bZ^\top \tilde\Lambda^2 \bZ}{\bE(\bZ^\top \tilde\Lambda^2 \bZ)} \xrightarrow{\bP} 1$. Now, $\bE(\bZ^\top \tilde\Lambda^2 \bZ)= \text{Trace}(\tilde\Lambda^2)= \sum_{i=1}^{n} \tilde\lambda^2_i$. Hence, by Slutsky's Theorem, we prove \eqref{eq:small_eval}. \par
To show asymptotic distribution of sample correlation $\rho_n$, note that, $\tilde\lambda_1 \ll \sum_{i=1}^{n} \tilde\lambda_i$, hence by Lemma \ref{lem:bilinear_concentration}, we have $\frac{\bX^\top J \bX}{\bE(\bX^\top J \bX)} \xrightarrow{\bP}1$. Also, $\bE(\bX^\top J \bX)= \sum_{i=1}^{n} \tilde\lambda_i$. Hence,
\begin{align*}
    \sqrt{n}\rho_n a_n &= \sqrt{n} \frac{n T_n a_n}{\sqrt{(\bX^\top J \bX)(\bY^\top J \bY)}} = (1+ o_{\bP}(1)) \sqrt{n} \frac{ n T_n a_n}{ \sum_{i=1}^{n}\tilde\lambda_i}\\
    &= (1+ o_{\bP}(1) \frac{nT_n}{\sqrt{\sum_{i=1}^{n} \tilde\lambda^2_i}} \xrightarrow{d} N(0,1).
\end{align*}
Also, by Cauchy-Schwarz inequality and the fact $\tilde \lambda_i \ge 0$, we have $a_n \le 1$. Therefore $\liminf_{n \rightarrow \infty} \text{Var}(\sqrt{n} \rho_n) \ge \lim\limits_{n \rightarrow \infty} \frac{1}{a_n} \ge 1$. 

(ii) Now, we assume $\tilde{\lambda}^2_1 \gg n \tilde{\lambda}^2_2$. As the previous part, conditioned on $\bX$, $nT_n \sim N(0, \bX^\top J \Sigma J \bX)$. Hence, $\frac{nT_n}{\sqrt{\bX^\top J \Sigma J \bX}}\sim N(0,1)$ and it is independent of $\bX$. So, it is enough to show $\zeta_n:= \frac{\bX^\top J \Sigma J \bX}{\tilde\lambda^2_1} \xrightarrow{d} \chi^2_1$. By \eqref{eq:sigma_var}, we have 

\begin{align*}
    \zeta_n \stackrel{d}{=} \frac{\sum_{i=1}^n \tilde\lambda^2_i Z^2_i}{ \tilde\lambda^2_1}= Z^2_1 + \sum_{i=2}^n \frac{\tilde\lambda^2_i}{\tilde\lambda^2_1}Z^2_i &\le Z^2_1 + \frac{n \tilde\lambda^2_2}{\tilde\lambda^2_1} \Big(\frac{1}{n} \sum_{i=2}^{n} Z^2_i\Big)\\
    & =Z^2_1+ o_{\bP}(1),
\end{align*}
since $\tilde{\lambda}^2_1 \gg n \tilde{\lambda}^2_2$ and $\sum_{i=2}^{n} Z^2_i= O_{\bP}(n)$. As $Z^2_1 \sim \chi^2_1$, we get the desired result.

To compute the asymptotic distribution of $\rho_n$, note that, following a similar calculation as above, if $Z_1,Z_2 \sim N(0,I)$ are independent, then
\begin{align*}
    \rho_n= \frac{\sum_{i=1}\tilde\lambda_i Z_{1i}Z_{2i}}{\sqrt{\Big(\sum_{i=1}\tilde\lambda_i Z^2_{1i}\Big)\Big(\sum_{i=1}\tilde\lambda_i Z^2_{2i}\Big)}}= \frac{Z_{11}Z_{21}+o_{\bP}(1)}{\sqrt{Z^2_{11}Z^2_{21}+o_{\bP}(1)}} \xrightarrow{d} \frac{1}{2} (\delta_{1}+\delta_{-1}),
\end{align*}
concluding the proof of the Theorem.

\textbf{Proof of Lemma \ref{lem:contagion}}
Here we assume the Contagion model, i.e., $\bX,\mathbf{Y} \sim N(0, \Sigma)$ are independent and $\Sigma= (1-\rho)I + \rho \mathbf{1} \mathbf{1}^\top$, $0<\rho<1$.

\noindent (i) We want to compute the asymptotic distribution of $\sqrt{n}T_n$. To this end, note that $J$ has eigenvalues $1$ with multiplicity $(n-1)$ and $0$ with multiplicity $1$ and $J\Sigma= \Sigma J= (1-\rho)J$. Since the eigenvalues of $(\Sigma^{1/2})^\top J \Sigma^{1/2}$ are same as the eigenvalues of $J\Sigma$, we have, using the notation of Theorem \ref{lem:ggm}, $\tilde\lambda_1= \ldots= \tilde\lambda_{n-1} = (1-\rho)$ and $\tilde\lambda_n=0$. Therefore,  $\tilde{\lambda}^2_1 \ll \sum_{i=1}^{n}\tilde{\lambda}^2_i$ and we invoke Theorem \ref{lem:ggm} (i) to claim 
$ \frac{nT_n}{\sqrt{(n-1)(1-\rho)^2}}\xrightarrow{d} N(0,1)$, i.e., $\sqrt{n}T_n \xrightarrow{d} N(0, (1-\rho)^2)$. Moreover,
\begin{align*}
    a_n = \frac{(1-\rho) (n-1)}{\sqrt{n (n-1) (1-\rho)^2}}\rightarrow 1,
\end{align*}
yielding $\sqrt{n} \rho_n \xrightarrow{d} N(0,1)$.\\
(ii) We want to compute the asymptotic distribution of $S_n= \bX^\top \bY$. Borrowing notation from Theorem \ref{lem:ggm}, we have $\lambda_1= 1+(n-1)\rho$ and $\lambda_2 =\ldots= \lambda_n= (1-\rho)$. Since $\lambda^2_1 \gg n \lambda^2_2$, we invoke Theorem \ref{lem:ggm} to obtain 
$\frac{n S_n}{1+ (n-1) \rho} = (1+o(1)) \frac{S_n}{\rho}\xrightarrow{d} AB$, where $A\sim N(0,1)$, $B\sim \sqrt{\chi^2_1}$ and $A,B$ are indendent. This completes the proof.

\subsection{Proof of Proposition \ref{thm:ols}}
We need to compare the naive variance with the variance of  $\hat{\beta}_{\rm ols}$. 
To this end, note the following conclusions:
\begin{align}\label{eq:prob_conv_ols}
    \frac{1}{n} \|\varepsilon\|^2\stackrel{\bP}{\rightarrow} \int g(x) dx, \qquad \frac{1}{n} \|\bX\|^2\stackrel{\bP}{\rightarrow} \int f(x) dx,\qquad \frac{1}{n} \varepsilon^\top \bX \stackrel{\bP}{\rightarrow} 0.
\end{align}
The first convergence in \eqref{eq:prob_conv_ols} is true since $\bE( \frac{1}{n} \|\varepsilon\|^2)= \frac{1}{n} \sum_{i=1}^{n} g(i/n) \rightarrow \int g(x) dx$, and $\text{Var}( \frac{1}{n} \|\varepsilon\|^2)\rightarrow0$. The other two convergences follow similarly. Moreover
\begin{equation}\label{eq:prob_ols_ii}
    \frac{1}{n}\bX^T\Sigma_\varepsilon\bX \stackrel{\bP}{\rightarrow} \int f(x)g(x)dx,
\end{equation}
since $\bE(\frac{1}{n}\bX^T\Sigma_\varepsilon\bX)= \frac{1}{n} \sum_{i=1}^n f(i/n)g(i/n) {\rightarrow} \int f(x)g(x)dx$, and $\text{Var}(\frac{1}{n}\bX^T\Sigma_\varepsilon\bX) \rightarrow 0$.

Hence,

\begin{align*}
    n \widehat{\text{Var}_{\text{naive}}}&= \frac{\boldsymbol{\varepsilon}^T(\|\bX\|^2- \bX\bX^\top)\boldsymbol{\varepsilon}}{ \|\bX\|^4}= \frac{\|\varepsilon\|^2}{\|\bX\|^2}\left(1- \frac{(\varepsilon^\top\bX)^2}{\|\varepsilon\|^2 \|\bX\|^2}\right) \stackrel{\bP}{\rightarrow} \frac{\int g(x) dx}{\int f(x) dx},
\end{align*}
where the first convergence follows from \eqref{eq:prob_conv_ols}. Further, we have,
\begin{align*}
    n \text{Var}(\hat{\beta}_{\rm ols})= \frac{n \bX^T\Sigma_\varepsilon\bX}{ \|\bX\|^4}\stackrel{\bP}{\rightarrow} \frac{\int f(x)g(x) dx}{(\int f(x) dx)^2}
\end{align*}
using \eqref{eq:prob_conv_ols} and \eqref{eq:prob_ols_ii}. Naive inference is valid iff 
$\frac{\text{Var}(\hat{\beta}_{\rm ols})}{\widehat{\text{Var}_{\text{naive}}}}< 1$, which by the above two displays equivalent to 
$$\int f(x) g(x) dx \leq \Big(\int f(x) dx\Big) \Big(\int g(x) dx\Big),$$
concluding our proof.

\section{Conclusion}
This paper is a first exploration of the conditions under which nonsense associations -- or the opposite phenomenon of valid / conservative inference -- occur for dependent data with dependence governed by MRF distributions. Our results serve as a proof of concept that researchers ought to be concerned about nonsense associations in settings beyond time series, but more work is needed to fully characterize the effects of unit-level dependence on measures of association. In particular, we report several surprising findings, and the extent to which these can occur in other settings should be explored in future work: (1) settings where the asymptotic behavior of the sample correlation is qualitatively different from the asymptotic behavior of the sample covariance, and (2) settings where unit-level positive dependence results in variance deflation and conservative inference. The latter is especially surprising because it contradicts the notion that positive dependence results in smaller effective sample sizes for statistical inference. Finally, our results on OLS upend the way that dependent residuals is typically presented in introductory regression texts.

\section*{Appendix}\label{sec:appendix}
\noindent Here, we state the technical Lemmas along with proofs which we have used throughout the paper. The following lemma follows from \cite[Lemma 3]{mukherjee2018global}.

\begin{lemma}\label{lem:aux_var}
    Suppose $\bX\sim \bP_{\beta,\bQ^{\rm CW}}$ with $\bQ^{\rm CW}_{i,j}=\mathbf{1}(i\neq j)/n$. Define a random variable $Z_n$ which given $\bX$ has a distribution $ N(\overline{\mathbf{X}}_n,\frac{1}{n\beta})$. Then under $\bP_{\beta,\bQ^{\rm CW}}$, given $Z_n=z_n$, each $X_i$'s are i.i.d. with 
	\begin{align*}
	\bP_{\beta,\bQ^{\rm CW}}(X_i=x_i|Z_n=z_n)=\frac{e^{\beta z_n x_i}}{e^{\beta z_n}+e^{-\beta z_n}}.
	\end{align*}
 Moreover, $Z \xrightarrow{d} 0$ if $\beta \le 1$ and $Z\xrightarrow{d} \frac{1}{2}(\delta_{m}+\delta_{-{m}})$ otherwise, where $m= m(\beta)$ is the unique positive root of $m=\tanh(\beta m)$. 
\end{lemma}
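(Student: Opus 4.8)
The plan is to construct $Z_n$ by a Hubbard--Stratonovich (Gaussian linearization) transformation and then to read off its marginal law by Laplace's method. Since $\bX \in \{\pm 1\}^n$ forces $\sum_i X_i^2 = n$, the Curie--Weiss Hamiltonian collapses to a function of the sample mean: $\frac{\beta}{2}\bx^\top \bQ^{\rm CW}\bx = \frac{\beta}{2n}\big((\sum_i x_i)^2 - n\big) = \frac{\beta n}{2}\overline{\bx}_n^2 - \frac{\beta}{2}$, so that $\bP_{\beta,\bQ^{\rm CW}}(\bx) \propto \exp\big(\frac{\beta n}{2}\overline{\bx}_n^2\big)$. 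First I would apply the identity $e^{a^2/2} = (2\pi)^{-1/2}\int_{\rr} e^{-z^2/2 + a z}\,dz$ with $a = \sqrt{n\beta}\,\overline{\bX}_n$ and rescale $z \mapsto \sqrt{n\beta}\,z$, which produces a joint law of $(\bX, Z_n)$ proportional to $\exp\big(-\tfrac{n\beta}{2}z^2\big)\prod_{i=1}^n e^{\beta z x_i}$ on $\{\pm 1\}^n \times \rr$.

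Reading off the two conditionals is then immediate. Completing the square in $z$ gives exponent $-\frac{n\beta}{2}(z-\overline{\bX}_n)^2$ up to a constant, so given $\bX$ we have $Z_n \sim N\big(\overline{\bX}_n, \tfrac{1}{n\beta}\big)$, matching the definition in the statement. Conversely, fixing $Z_n = z_n$ the joint law factorizes across coordinates as $\prod_{i=1}^n e^{\beta z_n x_i}$, so the spins are conditionally independent with $\bP(X_i = x_i \mid Z_n = z_n) = e^{\beta z_n x_i}/(e^{\beta z_n} + e^{-\beta z_n})$, exactly the claimed formula. This simultaneously verifies that the augmented measure marginalizes back to $\bP_{\beta,\bQ^{\rm CW}}$, so the construction is consistent.

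For the weak limit I would marginalize over $\bx$: summing $\prod_i e^{\beta z x_i}$ over $\{\pm 1\}^n$ yields $(2\cosh(\beta z))^n$, so the density of $Z_n$ is proportional to $\exp\big(-n\, g_\beta(z)\big)$, where $g_\beta(z) := \frac{\beta z^2}{2} - \log\cosh(\beta z)$. Laplace's method then forces $Z_n$ to concentrate on the global minimizers of $g_\beta$. Since $g_\beta'(z) = \beta\big(z - \tanh(\beta z)\big)$, the critical points solve the mean-field fixed-point equation $z = \tanh(\beta z)$, and $g_\beta''(0) = \beta(1-\beta)$ shows that the origin is the unique minimizer for $\beta < 1$, whereas for $\beta > 1$ the origin becomes a local maximum and two symmetric global minimizers $\pm m$ emerge, with $m$ the unique positive root of $m = \tanh(\beta m)$. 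By the $z \mapsto -z$ symmetry of $g_\beta$ the mass splits equally, giving $Z_n \xrightarrow{d} 0$ for $\beta \le 1$ and $Z_n \xrightarrow{d} \frac12(\delta_m + \delta_{-m})$ for $\beta > 1$.

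The hard part will be the Laplace analysis at the critical point $\beta = 1$: there $g_\beta''(0) = 0$ and the minimum at the origin is only quartic ($g_1(z) \sim z^4/12$), so the ordinary quadratic saddle-point expansion fails and one must argue concentration directly from the exponential tilt to conclude that $Z_n \to 0$ still holds (with fluctuations of order $n^{-1/4}$ rather than $n^{-1/2}$). A secondary point needing care is confirming that $\pm m$ are the \emph{global} rather than merely local minimizers throughout $\beta > 1$, and quantifying the exponential suppression of mass away from them, which is what upgrades pointwise concentration to the stated weak convergence. Both are standard for the Curie--Weiss model and, as noted, also follow directly from \cite[Lemma 3]{mukherjee2018global}.
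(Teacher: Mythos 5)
Your proposal is correct, and it is genuinely more than what the paper does: the paper offers no argument at all for this lemma, simply deferring to \cite[Lemma 3]{mukherjee2018global}, whereas you supply the standard self-contained proof. Your Hubbard--Stratonovich construction is the canonical one underlying the cited result: writing $\frac{\beta}{2}\bx^\top\bQ^{\rm CW}\bx=\frac{\beta n}{2}\overline{\bx}_n^2-\frac{\beta}{2}$ (using $x_i^2=1$), linearizing the quadratic term with a Gaussian integral, and reading off the two conditionals from the joint density $\propto \exp(-\frac{n\beta}{2}z^2)\prod_i e^{\beta z x_i}$ is exactly right, and it has the merit of verifying simultaneously that $Z_n\mid\bX\sim N(\overline{\bX}_n,\frac{1}{n\beta})$ and that the conditional spins are i.i.d.\ with the stated law, so the augmentation is consistent. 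Your Laplace analysis of the marginal $\propto e^{-n g_\beta(z)}$ with $g_\beta(z)=\frac{\beta z^2}{2}-\log\cosh(\beta z)$ is also sound, and the two caveats you flag are handled correctly: since the lemma asserts only weak convergence (not fluctuation scaling), the failure of the quadratic saddle-point expansion at $\beta=1$ is harmless---one needs only that $g_1$ has a unique global minimizer at $0$ and grows at infinity (note $g_\beta(z)\ge \frac{\beta z^2}{2}-\beta|z|$), which gives concentration directly; and for $\beta>1$ the global optimality of $\pm m$ follows because $g_\beta''(0)=\beta(1-\beta)<0$ makes the origin a local maximum, $\pm m$ are the only other critical points, $g_\beta\to\infty$ at infinity, and the evenness of $g_\beta$ forces the limiting mass to split exactly as $\frac{1}{2}(\delta_m+\delta_{-m})$. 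In short, where the paper buys brevity with a citation, your argument buys transparency and self-containment at essentially no extra cost.
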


\noindent The following lemma follows from \cite[Theorems 1-3]{ellis1978statistics}. 
\begin{lemma}\label{lem:cw_clt} Suppose $\bX\sim \bP_{\beta,\bQ^{\rm CW}}$ with $\bQ^{\rm CW}_{i,j}=\mathbf{1}(i\neq j)/n$.
\begin{enumerate}
\item[(a)]
If $\beta\in (0,1)$ then $$\sqrt{n}\overline{\bX}_n\stackrel{d}{\rightarrow}N\left(0,\frac{1}{1-\beta}\right).$$

\item[(b)]
If $\beta>1$ then we have $$\sqrt{n}(\overline{\bX}_n-m|\overline{\bX}_n>0)\stackrel{d}{\rightarrow}N\left(0,\frac{1-m^2}{1-\beta(1-m^2)}\right),$$
where $m$ is the unique positive root of the equation $t=\tanh(\beta t)$.

\item[(c)]
If $\beta=1$ then we have $$n^{1/4}\overline{\bX}_n\stackrel{d}{\rightarrow}Y,$$
where $Y$ is a random variable with density proportional to $e^{-y^4/12}$.
\end{enumerate}
\end{lemma}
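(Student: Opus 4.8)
The plan is to reduce everything to a Laplace-method analysis of the Hubbard--Stratonovich auxiliary variable $Z_n$ furnished by Lemma \ref{lem:aux_var}. Since $\bQ^{\rm CW}=\tfrac1n(\mathbf{1}\mathbf{1}^\top-I)$ gives $\bP_{\beta,\bQ^{\rm CW}}(\bz)\propto \exp\bigl(\tfrac{\beta}{2n}(\sum_i z_i)^2\bigr)$, coupling $\bX$ with $Z_n\mid\bX\sim N(\overline{\bX}_n,\tfrac1{n\beta})$ as in Lemma \ref{lem:aux_var} and integrating out $\bX\in\{\pm1\}^n$ shows that the marginal density of $Z_n$ is $p(z)\propto e^{-n g_\beta(z)}$ with $g_\beta(z)=\tfrac{\beta}{2}z^2-\log\cosh(\beta z)$: the quadratic self-interaction cancels and the spin sum produces $(2\cosh\beta z)^n$. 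The second fact I would record at the outset is that $\sqrt{n}(Z_n-\overline{\bX}_n)=\eta_n$, where $\eta_n\sim N(0,1/\beta)$ is \emph{exactly} independent of $\bX$ (its conditional law given $\bX$ does not depend on $\bX$). Thus the lemma follows once I determine the limit law of the smooth variable $Z_n$ and then deconvolve the independent Gaussian $\eta_n$.

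Next I would analyze the rate function $g_\beta$. Its derivative is $g_\beta'(z)=\beta(z-\tanh\beta z)$, so the stationary points solve $z=\tanh\beta z$: only $z=0$ when $\beta\le1$, and $z\in\{0,\pm m\}$ when $\beta>1$ with $m$ the positive root. Since $g_\beta''(z)=\beta\bigl(1-\beta\,\sech^2(\beta z)\bigr)$, one has $g_\beta''(0)=\beta(1-\beta)$ and, using $\tanh\beta m=m$, $g_\beta''(m)=\beta\bigl(1-\beta(1-m^2)\bigr)>0$. This computation is exactly what produces the trichotomy: for $\beta<1$ the minimizer $0$ is nondegenerate, for $\beta>1$ the minimizers are $\pm m$, and for $\beta=1$ the second derivative at $0$ vanishes, so the expansion $\log\cosh z=\tfrac{z^2}{2}-\tfrac{z^4}{12}+O(z^6)$ gives $g_1(z)=\tfrac{z^4}{12}+O(z^6)$.

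With this in hand I would run Laplace's method on $p(z)\propto e^{-ng_\beta(z)}$, using coercivity of $g_\beta$ (from $\log\cosh\beta z\le\beta|z|$) to dominate the tails and upgrade pointwise density convergence to weak convergence. For $\beta<1$, the change of variables $u=\sqrt{n}z$ gives $\sqrt{n}Z_n\xrightarrow{d}N(0,1/g_\beta''(0))=N(0,\tfrac{1}{\beta(1-\beta)})$; deconvolving $\eta_n$ by dividing characteristic functions (legitimate since $\varphi_{\eta_n}(t)=e^{-t^2/(2\beta)}\ne0$) and applying L\'evy continuity yields $\sqrt{n}\overline{\bX}_n\xrightarrow{d}N\bigl(0,\tfrac{1}{\beta(1-\beta)}-\tfrac1\beta\bigr)=N(0,\tfrac1{1-\beta})$, which is (a). For $\beta>1$, conditioning on $Z_n>0$ and rescaling $u=\sqrt{n}(z-m)$ about the well at $m$ gives $\sqrt{n}(Z_n-m\mid Z_n>0)\xrightarrow{d}N(0,1/g_\beta''(m))$; the same deconvolution, plus the fact that $\{Z_n>0\}$ and $\{\overline{\bX}_n>0\}$ differ with vanishing probability (both variables concentrate at $\pm m$ bounded away from $0$), produces $N\bigl(0,\tfrac1{g_\beta''(m)}-\tfrac1\beta\bigr)=N\bigl(0,\tfrac{1-m^2}{1-\beta(1-m^2)}\bigr)$, which is (b). For $\beta=1$ the quartic expansion forces the scaling $u=n^{1/4}z$, with $n\,g_1(n^{-1/4}u)\to u^4/12$, so $n^{1/4}Z_n$ converges to the law with density $\propto e^{-y^4/12}$; here the transfer to $\overline{\bX}_n$ is immediate since $n^{1/4}(Z_n-\overline{\bX}_n)=n^{-1/4}\eta_n\to0$, giving (c).

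The main obstacle is the rigor of the Laplace step: turning the rescaled pointwise limit of $e^{-ng_\beta(z)}$ into genuine weak convergence requires uniform tail control via coercivity together with bounds on the cubic (or higher) Taylor remainders, and in case (b) one must carefully justify that restricting the density to the positive well coincides with conditioning $\overline{\bX}_n$ on its sign. The deconvolution in (a) and (b) is a secondary technical point, but it is clean because $\varphi_{\eta_n}$ never vanishes and the limiting quotient is a bona fide Gaussian characteristic function, so L\'evy's theorem applies directly.
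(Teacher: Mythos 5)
Your proposal is correct, but it is worth noting that the paper does not actually prove this lemma at all: it simply cites Theorems 1--3 of Ellis and Newman \citep{ellis1978statistics} and moves on. What you have done is reconstruct, in a self-contained way, essentially the classical Ellis--Newman argument: their original proofs also proceed by convolving $\overline{\bX}_n$ with an independent Gaussian (your $\eta_n$, i.e.\ the Hubbard--Stratonovich/auxiliary variable of Lemma \ref{lem:aux_var}), identifying the smooth marginal $p(z)\propto e^{-ng_\beta(z)}$ with $g_\beta(z)=\tfrac{\beta}{2}z^2-\log\cosh(\beta z)$, running Laplace asymptotics at the relevant minimizer ($0$, $\pm m$, or the degenerate quartic well at criticality), and then removing the Gaussian by deconvolution. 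Your variance bookkeeping checks out in all three cases: $\tfrac{1}{\beta(1-\beta)}-\tfrac{1}{\beta}=\tfrac{1}{1-\beta}$ for (a); using $\sech^2(\beta m)=1-m^2$, $g_\beta''(m)=\beta\bigl(1-\beta(1-m^2)\bigr)>0$ and $\tfrac{1}{g_\beta''(m)}-\tfrac{1}{\beta}=\tfrac{1-m^2}{1-\beta(1-m^2)}$ for (b); and in (c) the transfer via $n^{-1/4}\eta_n\to 0$ needs no deconvolution at all. You also correctly flag the two genuine technical points: tail control via coercivity ($g_\beta(z)\ge \tfrac{\beta}{2}z^2-\beta|z|$) to upgrade pointwise Laplace limits to weak convergence, and in (b) the interchange of the events $\{Z_n>0\}$ and $\{\overline{\bX}_n>0\}$, which is legitimate because both variables concentrate at $\pm m$, $|Z_n-\overline{\bX}_n|=O_{\bP}(n^{-1/2})$, and both events have probability tending to $\tfrac12$ (note also that $\eta_n$ remains independent of $\overline{\bX}_n$ after conditioning on the $\bX$-measurable sign event, so the characteristic-function division remains valid under the conditioning). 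In short: the paper buys the result by citation; your route buys a complete, checkable proof at the cost of the standard Laplace-method rigor, and it is faithful to the method of the very source the paper cites.
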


Next, we prove a lemma regarding concentration inequalities of quadratic forms of multivariate normal distributions.
\begin{lemma}\label{lem:bilinear_concentration}
    Suppose $A$ is an $n \times n$ matrix with eigenvalues $\lambda_1 \ge \lambda_2 \ge \ldots \ge 0$ and $\frac{\lambda_1}{\sum_{i=1}^{n}\lambda_i} = o(1)$. If $Z \sim N(0,I_n)$, then $\frac{Z^\top A Z}{\bE(Z^\top A Z)} \stackrel{\bP}{\rightarrow}1$.
\end{lemma}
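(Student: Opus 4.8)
The plan is to reduce to a sum of independent squared Gaussians and then apply Chebyshev's inequality, with the hypothesis $\lambda_1/\sum_i\lambda_i = o(1)$ controlling the relative variance. First I would exploit the rotational invariance of the standard Gaussian: writing the spectral decomposition $A = U \Lambda U^\top$ with $\Lambda = \operatorname{diag}(\lambda_1,\ldots,\lambda_n)$ and $U$ orthogonal, the vector $U^\top Z$ is again distributed as $N(0,I_n)$. Hence $Z^\top A Z \stackrel{d}{=} \sum_{i=1}^n \lambda_i W_i^2$, where $W_1,\ldots,W_n$ are i.i.d.\ $N(0,1)$. This turns the quadratic form into a weighted sum of independent $\chi^2_1$ variables, which is the form amenable to elementary moment computations.

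Next I would compute the first two moments. Since $\bE(W_i^2)=1$, the mean is $\bE(Z^\top A Z) = \sum_{i=1}^n \lambda_i = \operatorname{Tr}(A)$, which is positive whenever $A \neq 0$. Using independence and $\mathrm{Var}(W_i^2)=2$, the variance is $\mathrm{Var}(Z^\top A Z) = 2\sum_{i=1}^n \lambda_i^2$. The goal is then to show the coefficient of variation vanishes, i.e.\
\[
\frac{\mathrm{Var}(Z^\top A Z)}{\bigl(\bE(Z^\top A Z)\bigr)^2} = \frac{2\sum_{i=1}^n \lambda_i^2}{\bigl(\sum_{i=1}^n \lambda_i\bigr)^2} \longrightarrow 0.
\]

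The one genuine inequality, and the place where the hypothesis enters, is the bound $\sum_{i=1}^n \lambda_i^2 \le \lambda_1 \sum_{i=1}^n \lambda_i$, valid because every $\lambda_i \ge 0$ and $\lambda_i \le \lambda_1$. Dividing by $\bigl(\sum_i \lambda_i\bigr)^2$ gives
\[
\frac{\sum_{i=1}^n \lambda_i^2}{\bigl(\sum_{i=1}^n \lambda_i\bigr)^2} \le \frac{\lambda_1}{\sum_{i=1}^n \lambda_i} = o(1),
\]
by assumption. Finally I would invoke Chebyshev's inequality: for any $\ep>0$,
\[
\bP\left(\left|\frac{Z^\top A Z}{\bE(Z^\top A Z)} - 1\right| > \ep\right) \le \frac{\mathrm{Var}(Z^\top A Z)}{\ep^2 \bigl(\bE(Z^\top A Z)\bigr)^2} \longrightarrow 0,
\]
which establishes the claimed convergence in probability. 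There is no real obstacle here; the argument is entirely routine once one diagonalizes, and the only substantive input is the nonnegativity of the eigenvalues together with the stated spectral dominance condition, which is exactly what makes the variance negligible relative to the squared mean.
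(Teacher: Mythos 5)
Your proposal is correct and follows essentially the same route as the paper's own proof: both reduce to the second-moment identities $\bE(Z^\top A Z)=\operatorname{Tr}(A)=\sum_i\lambda_i$ and $\mathrm{Var}(Z^\top A Z)=2\sum_i\lambda_i^2$, bound $\sum_i\lambda_i^2\le\lambda_1\sum_i\lambda_i$ using nonnegativity of the eigenvalues, and conclude by Chebyshev/Markov. Your explicit diagonalization to a weighted $\chi^2_1$ sum is merely a transparent way of deriving the trace formulas the paper invokes directly, so there is no substantive difference.
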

\begin{proof}
    By Markov's inequality, for any $t>0$,
    \begin{align}\label{eq:markov_bilin}
        \bP\Bigg(\Big\vert\frac{Z^\top A Z}{\bE(Z^\top A Z)}-1\Big\vert \ge t\Bigg) \le \frac{\text{Var}(Z^\top A Z)}{t^2 (\bE(Z^\top A Z))^2}
    \end{align}
    Note that, $\bE(Z^\top A Z) = \text{Tr}(A)= \sum_{i=1}^{n}\lambda_i$ and $\text{Var}(Z^\top A Z)= 2\text{Tr}(A^2) =2\sum_{i=1}^n \lambda^2_i$. So, the RHS of \eqref{eq:markov_bilin} is
    \begin{align*}
        \frac{2\sum_{i=1}^n \lambda^2_i}{t^2 (\sum_{i=1}^{n}\lambda_i)^2 } \le \frac{2\lambda_1}{ t^2\sum_{i=1}^{n}\lambda_i} =o(1),
    \end{align*}
    by our assumption. This completes the proof.
    \end{proof}

Next, we provide the proof of Lemmas omitted before.

\textbf{Proof of Lemma \ref{lem:iid_convergence}}
    The proof is done in two steps.
    \begin{enumerate}
        \item Suppose $G_1, \ldots G_n$ are i.i.d. $N(0,\tau^2_n)$ random variables. Then the conclusion of the Lemma holds.
        \item For any $a,b \in \mathbb{N}$, we have
        $$\bE(\sqrt{n} T^Z_n)^a(\mathbf{Z}^\top B_n \mathbf{Z})^b - \bE(\sqrt{n} T^G_n)^a(\mathbf{G}^\top B_n \mathbf{G})^b \rightarrow 0,$$
        where  $\sqrt{n} T^G_n= \frac{1}{\sqrt{n}} \sum_{i=1}^n (X_i -\overline{\mathbf{X}}_n)G_i$
    \end{enumerate}
    To show $(1)$, we will throughout assume $\tau^2_n=1$ and the case of general $\tau^2_n$ is immediate by change of scale. We begin by noting that for $a,b \in \mathbb{R}$ with $ b<1/2$, one has \begin{equation}\label{eq:normal_mgf}
        \bE(\exp(aG_1+bG^2_1))= \frac{1}{\sqrt{1-2b}} \exp(\frac{a^2}{2(1-2b)}).
    \end{equation} 
    Define $\upsilon_n= \frac{1}{\sqrt{n}}(X_1-\overline{\mathbf{X}}_n, \ldots X_n- \overline{\mathbf{X}}_n)$ so that $\sqrt{n} T^G_n=\upsilon^T_n \mathbf{G}$. Let $\bQ_n= P^\top \Lambda P = \sum_{i=1}^n \lambda_{i,n} \mathbf{p}_i\mathbf{p}^\top_i$ be the spectral decomposition of $\bQ_n$, where the eigenvalues $\lambda_{i,n}$, $i=1, 2, \ldots, n$ be the eigenvalues of $\bQ_n$ sorted in the decreasing order of absolute values. Since $\sum_{j=1}^{n} Q_{n,ij}=1$ for all $i$, we have $\lambda_{1,n}=1$ and $\mathbf{p}_1= \frac{1}{\sqrt{n}}\mathbf{1}$. Setting $\tilde{\mathbf{G}}= P\mathbf{G}$, where $\mathbf{G}=(G_1,\ldots, G_n)$, we have, $$\Big(\upsilon^T_n \mathbf{G}, \mathbf{G}^\top B_n \mathbf{G} \Big)\stackrel{d}{=} \Big(\upsilon^T_n P^T\tilde{\mathbf{G}}, \sum_{i=2}^n \lambda_{i,n} \tilde{G}^2_i\Big) \stackrel{d}{=} \Big(\mathbf{a}^\top_n\mathbf{G}, \sum_{i=2}^n \lambda_{i,n}G^2_i \Big),$$
where we defined $\mathbf{a}_n= P \upsilon_n$ and used the fact $P\mathbf{G}\stackrel{d}{=} \mathbf{G}$. Now, invoking \eqref{eq:normal_mgf}, we have, for $s,t\in \mathbb{R}$ and $t < \frac{1}{2}$ and fixed $\bX$,
\begin{align}\label{eq:mgf_split}
    \log \bE \Big(\exp(s\mathbf{a}^\top_n\mathbf{G}+t \sum_{i=2}^n \lambda_{i,n}G^2_i)\Big) &= -\frac{1}{2}\sum_{i=2}^n \log (1-2 t \lambda_{i,n})+ \frac{s^2a^2_1}{2} \\ &+ \sum_{i=2}^{n}\frac{s^2 a^2_i}{2(1-2b_i)}.
\end{align}
Denote the first summand of RHS by $f_n(t)$. Following the proof of \cite[Lemma 4.1]{xu2022ising}, we have $f_n(t)$ converges to the log of moment generating function of a random variable $S_0$. Next we analyse the third summand of the above display. To this end, we have 
\begin{align}\label{eq:dense_taylor}
    &\sum_{i=2}^{n}\frac{s^2 a^2_i}{2(1-2t \lambda_{i,n})}= \sum_{i=2}^{n}\sum_{j=0}^{\infty}\frac{s^2 a^2_i}{2} (2t \lambda_{i,n}))^j \nonumber \\
    &= \sum_{i=2}^{n} \frac{s^2 a^2_i}{2} + \underbrace{2ts^2 \sum_{i=2}^n a^2_i \lambda_{i,n} + \sum_{j=2}^{\infty}\sum_{i=2}^{n}\frac{s^2 a^2_i}{2} (2t \lambda_{i,n}))^j}_{R_n}
\end{align}
    To control the second summand above, $\mathbf{1}^T \upsilon_n=0$ yielding $a_1=0$. Hence,
    \begin{align*}
       \sum_{i=2}^n a^2_i \lambda_{i,n}= \sum_{i=1}^n a^2_i \lambda_{i,n} &=\upsilon^\top_n \bQ_n \upsilon_n \\ & = \frac{1}{n} \mathbf{X}^\top (I- \frac{1}{n}\mathbf{1}\mathbf{1}^\top) \bQ_n (I- \frac{1}{n}\mathbf{1}\mathbf{1}^\top)\mathbf{X}\\
& = \frac{1}{n}\mathbf{X}^\top B_n \mathbf{X} \xrightarrow{\bP} 0,
    \end{align*}
since $\bQ_n \mathbf{1}= \mathbf{1}$ and $\mathbf{X}^\top B_n \mathbf{X}$ is tight by \cite[Lemma 3.1]{xu2022ising} (note that, \cite[Lemma 3.1]{xu2022ising} states the result conditional on $\overline{\mathbf{X}}>0$, but the same proof works for the distribution conditional on $\overline{\mathbf{X}} \le 0$). This yields that the second summand of \eqref{eq:dense_taylor} is $o_{\bP}(1)$. Finally the third summand of \eqref{eq:dense_taylor} by 
\begin{align*}
    \sum_{j=2}^{\infty}\sum_{i=2}^{n}\frac{s^2 a^2_i}{2} (2t \lambda_{i,n})^j&= \sum_{j=2}^{\infty} \frac{s^2 (-2t)^j}{2} \sum_{i=2}^{n} a^2_i\lambda^j_{i,n} \\
    &\le \sum_{j=2}^{\infty} \frac{s^2 (2t)^j}{2} \sum_{i=2}^{n} a^2_i\lambda^2_{i,n} \le \frac{s^2}{2 (1-2t)} \sum_{i=2}^{n} a^2_i\lambda^2_{i,n},
\end{align*}
where the first inequality uses $|\lambda_{i,n}|\le 1$. Note that,
\begin{equation*}
    \sum_{i=2}^{n} a^2_i\lambda^2_{i,n}= \upsilon^\top Q^2_n \upsilon  =\frac{1}{n} \mathbf{X}^\top B^2_n \mathbf{X} \xrightarrow{\bP} 0
\end{equation*}
invoking \cite[Lemma 3.1]{xu2022ising} as above. Therefore, we have, by \eqref{eq:mgf_split},
\begin{equation*}
    \log \bE \Big(\exp(s\mathbf{a}^\top_n\mathbf{G}+t \sum_{i=2}^n \lambda_{i,n}G^2_i)\Big)= f_n(t) +\frac{s^2}{2} \mathbf{a}^\top_n \mathbf{a}_n+ R_n,
\end{equation*}
where $R_n= o_{\bP}(1)$ and 
$|R_n| \le 2|t|s^2 + \frac{s^2}{2|1-2t|}$. Moreover, $\mathbf{a}^\top_n \mathbf{a}_n= \upsilon^T_n \upsilon_n= \frac{1}{n} \sum_{i=1}^{n}(X_i - \overline{\mathbf{X}}_n)^2 \rightarrow 1-m^2_1$ and $|\mathbf{a}^\top_n \mathbf{a}_n| \le 4$. Therefore, by the dominated convergence theorem, we have 
\begin{align*}
    \lim\limits_{n\rightarrow \infty}\bE \Big(\exp(s\mathbf{a}^\top_n\mathbf{G}+t \sum_{i=2}^n \lambda_{i,n}G^2_i)\Big) = e^{\frac{s^2}{2}(1-m^2_1)}\bE e^{tS_0}.
\end{align*}
 
 This shows, $(\upsilon^T_n \mathbf{G}, \mathbf{G}^\top B_n \mathbf{G}) \xrightarrow{d} (S_0, \sqrt{1-m^2_1}W_0)$ where $W_0 \sim N(0,1)$ and $S_0, W_0$ are asymptotically independent proving (1).

    To show $(2)$, note that, $|B_n(i,j)| \le \frac{4}{n}$, since $\bQ_n$ is the (scaled) adjacency matrix of $\mathcal{G}_{n,p}$ or $\mathcal{G}^d_n$ and $|c_{i,n}| \le \frac{\kappa}{\sqrt{n}}$ by our assumption. Set $r= \frac{a}{2}+b$ and $S(l,2r)$ be the set of all positive integer solutions to the equation $\sum_{i=1}^l \alpha_i=2r$. Then we have,
\begin{align*}
    &|\bE(\sqrt{n} T^Z_n)^a(\mathbf{Z}^\top B_n \mathbf{Z})^b- \bE(\sqrt{n} T^G_n)^a(\mathbf{G}^\top B_n \mathbf{G})^b| \\
    & \le n^{-r} \kappa^a 4^b \sum_{l=1}^{2r} n^l \sum_{\mathbb{\alpha} \in S(l,2r)}|\bE \prod\limits_{i=1}^l Z^l_i-\bE \prod\limits_{i=1}^l G^l_i|,
\end{align*}
and the RHS in the above display can be bounded exactly as the proof of \cite[Lemma 4.1]{xu2022ising}.

\textbf{Proof of Lemma \ref{lem:iid_to_cw}}
We do the entire proof conditioning on $\bX$. By Lemma \ref{lem:aux_var}, we have the existence of auxilliary variable $Z_{n,2}$ such that $Y_1, \ldots, Y_n$ are independent given $Z_{n,2}$ with

\begin{align*}
    \bE(Y_i|Z_{n,2})=\tanh(\beta_2 Z_{n,2})=: \mu, \quad \text{Var}(Y_i|Z_{n,2})= \sech^2(\beta_2 Z_{n,2}), \text{ for all }i.
\end{align*}
Define $\bmu_n= \mu \mathbf{1}$ and recall from proof of Lemma \ref{lem:iid_convergence}, we have  $\upsilon_n= \frac{1}{\sqrt{n}}(X_1-\overline{\mathbf{X}}_n, \ldots X_n- \overline{\mathbf{X}}_n)$. Therefore, we have, using $B_n \mathbf{1}=0$, given $Z_n$
\begin{align*}
    (\sqrt{n}T_n, \mathbf{Y}^\top B_n \mathbf{Y})&= (\upsilon^\top_n \mathbf{Y},\mathbf{Y}^\top B_n \mathbf{Y}) \\ 
    &= (\upsilon^\top_n (\mathbf{Y} -\bmu_n), (\mathbf{Y} -\bmu_n)^\top B_n (\mathbf{Y} -\bmu_n))\\
    & \xrightarrow{d} (\sqrt{(1-m^2_1)(1-m^2_2)}W_0,S_0),
\end{align*}
 with $W_0 \sim N(0,1)$, $S_0$ as in Lemma \ref{lem:iid_convergence} and $W_0, S_0$ being independent by Lemma \ref{lem:iid_convergence} since $\sech^2(\beta_2Z_{n,2}) \xrightarrow{\bP} 1-m^2_2$ by Lemma \ref{lem:aux_var}.

\bibliographystyle{plain}
\bibliography{ref}

\end{document}